\newtheorem {theorem}{Theorem}[section]
\newtheorem {lemma}{Lemma}[section]
\newtheorem {proposition}{Proposition}[section]
\newtheorem {example}{Example}[section]
\newtheorem {definition}{Definition}[section]
\newtheorem {remark}{Remark}[section]
\def\ar{a\kern-.370em\raise.16ex\hbox{\char95\kern-0.53ex\char'47}\kern.05em}
\def\ees{{\accent"5E e}\kern-.385em\raise.2ex\hbox{\char'23}\kern-.08em}
\def\eex{{\accent"5E e}\kern-.470em\raise.3ex\hbox{\char'176}}
\def\AR{A\kern-.46em\raise.80ex\hbox{\char95\kern-0.53ex\char'47}\kern.13em}
\def\EES{{\accent"5E E}\kern-.5em\raise.8ex\hbox{\char'23 }}
\def\EEX{{\accent"5E E}\kern-.60em\raise.9ex\hbox{\char'176}\kern.1em}
\def\ow{o\kern-.42em\raise.82ex\hbox{
  \vrule width .12em height .0ex depth .075ex \kern-0.16em \char'56}\kern-.07em}
\def\OW{O\kern-.460em\raise1.36ex\hbox{
\vrule width .13em height .0ex depth .075ex \kern-0.16em \char'56}\kern-.07em}
\def\UW{U\kern-.42em\raise1.36ex\hbox{
\vrule width .13em height .0ex depth .075ex \kern-0.16em \char'56}\kern-.07em}
\def\DD{D\kern-.7em\raise0.4ex\hbox{\char '55}\kern.33em}
\newtheorem{algorithm}{Algorithm}[section]
\newcommand{\argmin}{\operatornamewithlimits{argmin}}
\title{Polynomial mathematical programs with equilibrium constraints and semidefinite programming relaxations$^*$}
\author{Liguo Jiao}
\address[Liguo Jiao]{School of Mathematical Sciences, Soochow University, Suzhou 215006, Jiangsu Province, People's Republic of China}
\email{hanchezi@163.com}
\author{Jae Hyoung Lee$^\dag$}
\address[Jae Hyoung Lee]{Department of Applied Mathematics, Pukyong National University, Busan 48513, Republic of Korea}
\email{mc7558@naver.com}
\author{Ti\ees n-S\ow N Ph\d{a}m$^\ddag$}
\address[Ti\ees n-S\ow n Ph\d{a}m]{Department of Mathematics, University of Dalat, 1 Phu Dong Thien Vuong, Dalat, Vietnam}
\email{sonpt@dlu.edu.vn}
\thanks{$^*$This work was supported by the National Research Foundation of Korea (NRF) Grant funded by the Korea government (MSIP) (NRF-2018R1C1B6001842).}
\thanks{$^\dag$Corresponding author}
\thanks{$^\ddag$This author is partially supported by Vietnam National Foundation for Science and Technology Development (NAFOSTED)}
\subjclass[2010]{90C22~$\cdot$~90C26~$\cdot$~90C33}
\keywords{Mathematical programming, equilibrium constraints, Lasserre hierarchies, polynomial optimization}
\date{ \today}
\begin{document}
\maketitle

\begin{abstract}
This paper focuses on the study of a mathematical program with equilibrium constraints, where the objective and the constraint functions are all polynomials.
We present a method for finding its global minimizers and global minimum using a hierarchy of semidefinite programming (SDP) relaxations
and prove the convergence result for the method. Numerical experiments are presented to show the efficiency of the proposed algorithm.
\end{abstract}

\section{Introduction}
Mathematical Programs with Equilibrium Constraints (MPECs for short) form an important class of (nonlinear) constrained optimization problems, in which the variables satisfy a finite number of constraints together with an equilibrium condition such as variational inequalities or complementarity conditions. The term ``MPEC" is believed to have been put in \cite{Harker1988}, and
the word ``equilibrium'' is used since the variational inequality constraints of the MPEC typically model specific equilibrium phenomena that occur in engineering and economic applications.
MPECs are natural extensions of optimization problems with variational inequality constraints~\cite{Facchinei2003,Lucet2002,Ye2010}, bilevel optimization problems~\cite{Dempe2014, Jeyakumar2016, Lin2014, Nie2017}, semi-infinite optimization problems~\cite{Jongen2011,Lasserre2012,Stein2001}, minimax (robust) optimization problems~\cite{Beck2009,Bertsekas2010,Gaudioso2006}. There is a large literature on all aspects of MPECs; we refer the reader to the comprehensive monographs \cite{Facchinei2003, Luo1996, Outrata1998} with the references therein.

MPECs have been studied in the past years, with several solution methods developed; for example, the elastic mode approach~\cite{Anitescu2005,Anitescu2007}, relaxation schemes~\cite{DeMiguel2005,Steffensen2010}, smoothing method~\cite{Facchinei1999}, sequential quadratic programming methods~\cite{Fletcher2006}, interior-point method~\cite{Liu2004, Luo1996}, exact penalization approach~\cite{Jiang2018,Scholtes1999}, see also the references therein.

We would like to note that, in general, an MPEC is a {\em nonconvex and nondifferentiable} optimization problem that includes certain combinatorial features in its constraints. Therefore, it is computationally very difficult to solve, especially if we wish to find a {\em globally} optimal solution.

In this paper, we are interested in an MPEC with {\em polynomial data} which admits the following mathematical form:
\begin{eqnarray}
f_* := &\min\limits_{(x,y)\in \mathbb{R}^n\times\mathbb{R}^m}& f(x,y) \notag\\
&{\rm subject\ to}& (x,y)\in A \cap B, \eqname{MPEC}\\
&& \varphi(x,y,v)\ge 0, \ \forall v\in B(x), \notag
\end{eqnarray}
where $f\colon \mathbb{R}^n \times \mathbb{R}^m\to\mathbb{R},$ $\varphi\colon \mathbb{R}^n\times \mathbb{R}^m\times \mathbb{R}^m\to \mathbb{R}$ are polynomial functions, and we call $\varphi(x,y,v)\ge 0$ for all $v\in B(x)$ the {\it equilibrium constraints}$,$ $A, B\subset \mathbb{R}^n \times \mathbb{R}^m$ are basic closed semi-algebraic sets, and $B(x) := \{y \in \mathbb{R}^m : (x, y) \in B\}$ for $x \in \mathbb{R}^n.$

We aim to propose a computational method for finding/approximating the optimal value $f_*$ (and minimizers, if possible) of Problem~{\rm (MPEC)}. To do this, we first define the optimal value function for the equilibrium constraints by $J(x, y) := \min_{v\in B(x)}\varphi(x,y,v).$
Under a blanket assumption, that is commonly used in polynomial optimization (see \cite{Jeyakumar2016, Lasserre2010, Lasserre2011, Lasserre2012}), we show that the function $J$ is well-defined on some compact set $\Omega \subset \mathbb{R}^n \times \mathbb{R}^m$ containing the set $B$ and that (MPEC)  is equivalent to the following constrained optimization problem
\begin{eqnarray}
\mathrm{val}({\rm P}) := &\min\limits_{(x,y)\in \mathbb{R}^n\times\mathbb{R}^m}& f(x,y) \notag\\
&{\rm s.t.}& (x,y)\in A\cap B, \eqname{P}\\
&& J(x,y)\ge 0. \notag
\end{eqnarray}
Note that, in general, the function $J$ is not polynomial and so Problem (P) is not a polynomial optimization problem. Nevertheless, by using the ``joint+marginal'' approach for parametric polynomial optimization developed by Lasserre in \cite{Lasserre2010}, we can construct a sequence of polynomials $J_k \colon \mathbb{R}^n \times \mathbb{R}^m \rightarrow \mathbb{R}$ (with degree at most $2k, k \in \mathbb{N})$ that approximate from below the function $J$ on $\Omega,$ and with the strong property that $J_k \to J$ in the $L_1$-norm as $k \to \infty.$ (In particular, $J_{k_\ell} \to J$ almost uniformly on $\Omega$ for some subsequence $k_{\ell}, \ell \in \mathbb{N}.$) Then, ideally, we could solve the nested sequence of polynomial optimization problems:
\begin{eqnarray}
&\min\limits_{(x, y)\in \mathbb{R}^n\times\mathbb{R}^m}& f(x,y)\notag \\
&{\rm s.t.}& (x, y)\in A \cap B, \eqname{P$^k$}\\
&& J_k(x, y) \ge 0.\notag
\end{eqnarray}
For fixed $k,$ we may approximate (and often solve exactly) Problem (P$^k$) by solving a hierarchy of semidefinite programming relaxations, as defined by Lasserre in \cite{Lasserre2001}. However, as the feasible set of Problem (P$^k$) may be empty, we relax the constraint $J_k(x, y) \ge  0$ to $J_k (x, y) \ge - \epsilon$ for some scalar $\epsilon > 0,$ which can adjust dynamically during the algorithm. Moreover, in order to establish convergence of our algorithm, we also relax the sets $A$ and $B$ to basic closed semi-algebraic sets $A_\epsilon$ and $B_\epsilon,$ respectively. In other words, we will relax the problem (P$^k$) to the following polynomial optimization problem
\begin{eqnarray}
\mathrm{val(P_\epsilon^k)} := &\min\limits_{(x, y)\in \mathbb{R}^n\times\mathbb{R}^m}& f(x,y)\notag \\
&{\rm s.t.}& (x, y)\in A_\epsilon \cap B_\epsilon, \eqname{P$^k_\epsilon$}\\
&& J_k(x, y) \ge -\epsilon.\notag
\end{eqnarray}
As mentioned above, this problem can be solved by using Lasserre-type semidefinite programming relaxations.
Finally, let $v_{\epsilon}^k := \min_{1 \le i \le k} \mathrm{val}({\rm P}_{\epsilon}^i).$ Then we can show that the following statements are valid:
\begin{itemize}
\item[{\rm (i)}]  The sequence $\{v_{\epsilon}^k\}_{k \in \mathbb{N}}$ is bounded from above by $f_* + \epsilon.$
\item[{\rm (ii)}]  For all $\epsilon \in (0, +\infty)$ except finitely many points, the sequence $\{v_{\epsilon}^k\}_{k \in \mathbb{N}}$ converges  monotonically, decreasing to the optimal value $\mathrm{val(P_\epsilon)}$ of the problem
\begin{eqnarray}
\mathrm{val(P_\epsilon)} := &\min\limits_{(x,y)\in \mathbb{R}^n\times\mathbb{R}^m}& f(x,y)\notag \\
&{\rm s.t.}& (x,y)\in A_\epsilon\cap B_\epsilon, \eqname{P$_\epsilon$}\\
&& J(x,y)\ge-\epsilon.\notag
\end{eqnarray}
\item[{\rm (iii)}] The following relation holds true
\begin{eqnarray*}
\mathrm{val(P_\epsilon)} &=& f_* + c \epsilon^q + o(\epsilon^q) \quad \textrm{ as } \quad \epsilon \to 0^+
\end{eqnarray*}
for some $c \le 0$ and $q > 0.$ In particular,
\begin{eqnarray*}
\lim_{\epsilon \to 0^+} \lim_{k \to \infty} v_{\epsilon}^k &=& f_*.
\end{eqnarray*}
\end{itemize}

Our approach extends the sequential SDP relaxations, introduced in \cite{Jeyakumar2016} for bilevel polynomial optimization problems and in \cite{Lasserre2012} for semi-infinite optimization problems, to mathematical programs with equilibrium constraints.
It is worth emphasizing that it is different from the papers \cite{Jeyakumar2016,Lasserre2012} that we do not make technical assumptions concerning the interior of the feasible sets; moreover, the idea in this paper can be used to modify the algorithms in the two papers cited, so that we can drop these technical assumptions.

The rest of this paper is organized as follows. Section~\ref{sect:2} contains some preliminaries on semi-algebraic geometry.
Section~\ref{sect:3} presents convergence of our sequential SDP relaxation scheme for solving Problem (MPEC). Finally, conclusions are given in Section~\ref{sect:4}.

\section{Preliminaries}\label{sect:2}

Throughout this paper, $\mathbb{R}^n$ denotes the Euclidean space with dimension $n.$ The open ball in $\mathbb{R}^n$ centered at $x$ with radius $\rho$ is denoted by $\mathbb{B}(x,\rho).$ We also use $\mathbb{N}$ to denote all the nonnegative integers. For a set $D$ in $\mathbb{R}^n,$ we use ${\rm cl}(D)$ and ${\rm int}(D)$ to denote the closure and interior of $D,$ respectively.

Denote by $\mathbb{R}[x]$ the ring of polynomials in $x:=(x_1, \ldots, x_n)$ with real coefficients.
For a polynomial $f,$ we use $\deg f$ to denote the degree of $f.$
We say that a polynomial $f\in\mathbb{R}[x]$ is {\it sum of squares} if there exist polynomials $q_l,$ $l = 1,\ldots,r,$ such that $f =\sum_{l=1}^{r}q_{l}^2.$
The set consisting of all sum of squares polynomial in $x$ is denoted by $\mathrm{\Sigma}^2[x].$
Moreover, the set of all sum of squares polynomials in $x$ with degree at most $d$ is denoted by $\mathrm{\Sigma}^2_d[x].$
For a multi-index $\alpha := (\alpha_1, \ldots, \alpha_n) \in \mathbb{N}^n,$ let $|\alpha|:=\sum_{i=1}^n\alpha_i$ and
the notation $x^\alpha$ stands for the monomial $x_1^{\alpha_1}\cdots x_n^{\alpha_n}.$ Finally, let $\mathbb{N}^n_d := \{\alpha \in \mathbb{N}^n :  |\alpha|\le d\}.$

The following Archimedean property is commonly used in polynomial optimization (see \cite{Ha2017, Lasserre2009} and references therein).
\begin{definition}{\rm
A finite system $\{f_1, \ldots, f_s\} \subset \mathbb{R}[x]$ is said to satisfy the {\it Archimedean property} if there exists $R > 0$ such that the quadratic polynomial $x\mapsto R - \|x\|^2$ can be written in the form
\begin{eqnarray*}
R - \|x\|^2 &=& \sigma_0+\sum_{j = 1}^{s} \sigma_j f_j(x),
\end{eqnarray*}
for some sum of squares polynomials $\sigma_j \in \mathrm{\Sigma}^2[x].$
}\end{definition}

In what follows, we recall some notions and results of semi-algebraic geometry, which can be found in \cite{Bochnak1998,Ha2017}.

\begin{definition}{\rm
A subset of $\mathbb{R}^n$ is called {\em semi-algebraic} if it is a finite union of sets of the form $\{x\in\mathbb{R}^n:f_i(x)=0, \ i=1,\ldots,k, \ f_i(x)>0, \ i=k+1,\ldots, p\},$ where all $f_i$ are polynomials. If $A\subset \mathbb{R}^n$ and $B\subset\mathbb{R}^p$ are semi-algebraic sets, then the map $F\colon A\to B$ is said to be {\em semi-algebraic} if its graph $\{(x,y)\in A\times B : y=F(x)\}$ is a semi-algebraic subset in $\mathbb{R}^n\times\mathbb{R}^p.$}
\end{definition}

Note that semi-algebraic sets and functions enjoy a number of remarkable properties. We summarize some of the important properties which will be used in this paper.

\begin{proposition}\label{property of SA}
The following statements hold$:$
\begin{itemize}
\item[{\rm (i)}] Each semi-algebraic set in $\mathbb{R}$ is a finite union of intervals and points.
\item[{\rm (ii)}] Finite union $($resp.$,$ intersection$)$ of semi-algebraic sets is semi-algebraic.
\item[{\rm (iii)}]The Cartesian product $($resp.$,$ complement$,$ closure$)$ of semi-algebraic sets is semi-algebraic.
\item[{\rm (iv)}] If $f,$ $g$ are semi-algebraic functions on $\mathbb{R}^n$ and $\lambda\in\mathbb{R},$ then $f+g,$ $fg$ and $\lambda f$ are all semi-algebraic functions.
\item[{\rm (v)}] If $f$ is a semi-algebraic function on $\mathbb{R}^n$ and $\lambda\in\mathbb{R},$ then $\{x\in \mathbb{R}^n : f(x)\le\lambda\},$ $\{x\in \mathbb{R}^n : f(x)<\lambda\}$ and $\{x\in \mathbb{R}^n : f(x)=\lambda\}$ are all semi-algebraic sets.
\item[{\rm (vi)}] A composition of semi-algebraic maps is a semi-algebraic map.
\end{itemize}
\end{proposition}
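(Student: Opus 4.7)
The plan is to treat the six assertions as closure properties of the class of semi-algebraic sets. Items (i), (ii), and the first two parts of (iii) follow directly by manipulating the defining Boolean combinations of polynomial sign conditions, while the closure in (iii), together with (iv)--(vi), will be reduced to the Tarski--Seidenberg projection theorem, which states that the image of a semi-algebraic set under a coordinate projection is again semi-algebraic. I would cite this result as a black box from \cite{Bochnak1998, Ha2017}.

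For (i), I would observe that a basic semi-algebraic set in $\mathbb{R}$ is cut out by finitely many univariate polynomials; these have only finitely many roots in total, and each polynomial has constant sign on every connected component of the complement of its root set, so the basic set is a finite union of intervals and points, and taking a finite union preserves this form. For (ii), finite union is immediate from the definition, and intersection of two basic sets is obtained by concatenating the defining sign conditions, after which one distributes intersection over the outer unions. For the Cartesian product in (iii), I would concatenate the conditions on the disjoint variable blocks; for the complement, I would use De Morgan's laws together with the identities $\mathbb{R}^n \setminus \{f = 0\} = \{f > 0\} \cup \{-f > 0\}$ and $\mathbb{R}^n \setminus \{f > 0\} = \{f = 0\} \cup \{-f > 0\}$ to reduce every atom to an allowed sign condition.

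The remaining items all exploit that, by definition, a semi-algebraic function has semi-algebraic graph, so operations on functions translate into set-theoretic descriptions that differ from their graphs only by a projection. For (iv), I would write $\mathrm{graph}(f+g) = \pi\bigl(\{(x,u,v,z) : (x,u)\in\mathrm{graph}(f),\ (x,v)\in\mathrm{graph}(g),\ z = u+v\}\bigr)$, observe that the inner set is a finite intersection of semi-algebraic sets (by (ii), (iii), and (v)), and invoke Tarski--Seidenberg; the cases of $fg$ and $\lambda f$ are analogous. For (v), the set $\{f \le \lambda\}$ is the image of $\mathrm{graph}(f) \cap (\mathbb{R}^n \times (-\infty,\lambda])$ under the projection onto the first factor, and the strict and equality variants are treated in the same way. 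For (vi), the graph of $g\circ f$ equals $\pi\bigl(\{(x,y,z):(x,y)\in\mathrm{graph}(f),\ (y,z)\in\mathrm{graph}(g)\}\bigr)$, again a projection of a semi-algebraic set. Finally, the closure part of (iii) is handled by expressing $\mathrm{cl}(D) = \{x : \forall \varepsilon > 0,\ \exists y \in D,\ \|x-y\|^2 < \varepsilon^2\}$ and applying quantifier elimination (equivalently, Tarski--Seidenberg in the form that projections of semi-algebraic sets are semi-algebraic, applied twice to the existential and then to handle the universally quantified $\varepsilon$).

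The main obstacle is really only the Tarski--Seidenberg theorem itself, whose proof is nontrivial and is typically carried out via cylindrical algebraic decomposition or Seidenberg's original argument. Since the proposition is invoked here as standard background and is developed in full in \cite{Bochnak1998, Ha2017}, I would import Tarski--Seidenberg as a lemma rather than reprove it, which reduces the remainder to routine bookkeeping with sign conditions and projections.
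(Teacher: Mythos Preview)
Your proof sketch is correct and follows the standard route found in the cited references \cite{Bochnak1998, Ha2017}. Note, however, that the paper itself does not supply a proof of this proposition at all: it is stated as background material (``We summarize some of the important properties which will be used in this paper'') and simply referred to the literature. So there is no paper proof to compare against; you have gone further than the paper by actually outlining the arguments.

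One very minor point: in your treatment of (iv) you cite (v) as one of the ingredients to see that the inner set $\{(x,u,v,z):(x,u)\in\mathrm{graph}(f),\ (x,v)\in\mathrm{graph}(g),\ z=u+v\}$ is semi-algebraic, but (v) is not needed there---the condition $z=u+v$ is a single polynomial equation and hence directly semi-algebraic, and the graph conditions are handled by (ii) and (iii) alone. This avoids any appearance of forward reference, since your proof of (v) comes afterward. Otherwise the argument is clean.
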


\begin{theorem}[Tarski--Seidenberg Theorem]\label{Tarski Seidenberg Theorem}
The image and inverse image of a semi-algebraic set under a semi-algebraic map are semi-algebraic sets.
In particular$,$ the projection of a semi-algebraic set is still a semi-algebraic set.
\end{theorem}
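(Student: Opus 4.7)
The plan is to peel the theorem down to a single quantifier-elimination step. First, I observe that the statements for images and inverse images reduce to the projection case: if $F\colon A \to B$ is semi-algebraic with graph $\Gamma_F \subset \mathbb{R}^n \times \mathbb{R}^p$, then for any semi-algebraic $S \subset A$ and $T \subset B$,
$$F(S) \;=\; \pi_{\mathbb{R}^p}\bigl(\Gamma_F \cap (S \times \mathbb{R}^p)\bigr), \qquad F^{-1}(T) \;=\; \pi_{\mathbb{R}^n}\bigl(\Gamma_F \cap (\mathbb{R}^n \times T)\bigr),$$
and the right-hand sides are projections of semi-algebraic sets by Proposition~\ref{property of SA}(ii)--(iii). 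Moreover, any projection $\mathbb{R}^{n+p} \to \mathbb{R}^n$ factors as a composition of $p$ single-coordinate projections $\mathbb{R}^{k+1} \to \mathbb{R}^k$, so an induction on $p$ reduces the whole theorem to the following core statement: if $S \subset \mathbb{R}^{n+1}$ is semi-algebraic, then $\pi(S) \subset \mathbb{R}^n$ is semi-algebraic, where $\pi$ drops the last coordinate.

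For this core step, since every semi-algebraic set is a finite union of basic ones and $\pi$ commutes with unions, it suffices to treat the case where $S$ is given by finitely many conditions $p_i(x,t) = 0$ and $q_j(x,t) > 0$ for some polynomials $p_i, q_j \in \mathbb{R}[x,t]$. Viewing each $p_i$ and $q_j$ as a polynomial in $t$ whose coefficients lie in $\mathbb{R}[x]$, the task becomes: describe, by a Boolean combination of polynomial sign conditions on $x$, the set of parameters $x \in \mathbb{R}^n$ for which there exists $t \in \mathbb{R}$ satisfying the prescribed signs.

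This is the main obstacle and requires genuine real-algebraic input. The standard tool is Thom's lemma, which says that a finite family of univariate polynomials together with all their derivatives partitions $\mathbb{R}$ into cells, each cell being characterized by a sign vector on the family, and that the feasibility of such a sign vector can be read off algebraically from the coefficients. Applying this to $p_i(x,\cdot)$ and $q_j(x,\cdot)$ with $x$ as a parameter and tracking the resulting sign conditions on the coefficients (polynomials in $x$), on leading coefficients, on $t$-derivatives, and on certain subresultants, one obtains finitely many polynomials $h_1, \ldots, h_N \in \mathbb{R}[x]$ whose sign pattern determines membership in $\pi(S)$. The induction that makes this work proceeds on the maximum $t$-degree: a case split on whether the leading coefficient of the highest-degree factor vanishes or not reduces to a projection problem of strictly smaller $t$-degree, closing the inductive step.

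Combining the three reductions yields the theorem. The only substantive content lies in the univariate elimination of step three; the rest is formal manipulation built on the closure properties recorded in Proposition~\ref{property of SA}. I expect the technical bookkeeping in the Thom-style argument---carefully indexing the sign vectors realizable over a varying parameter $x$ and certifying them by semi-algebraic conditions---to be the most error-prone portion of the write-up, whereas the graph and iterated-projection reductions are essentially one-line observations.
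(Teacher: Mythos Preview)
The paper does not prove the Tarski--Seidenberg Theorem at all: it is stated in Section~\ref{sect:2} as a classical background result, explicitly recalled from \cite{Bochnak1998,Ha2017} alongside the other preliminaries on semi-algebraic geometry, and no proof is given. Your proposal therefore goes well beyond what the paper does.

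That said, your sketch follows the standard textbook route (reduce images and preimages to projections via the graph, factor a projection into single-coordinate drops, and eliminate one variable at a time using Thom-type sign-condition bookkeeping or, equivalently, subresultant/Sturm-sequence arguments). This is essentially the approach in \cite{Bochnak1998}, so if the aim were to supply a self-contained proof, your outline is on the right track; but for the purposes of this paper no proof is expected.
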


\begin{remark}\label{Remark1}{\rm
If $A\subset \mathbb{R}^n,$ $B\subset\mathbb{R}^m,$ and $C \subset\mathbb{R}^n\times\mathbb{R}^m$ are semi-algebraic sets, then we see that
$U:=\{x\in A:(x,y)\in C, \ \forall y\in B\}$ is also a semi-algebraic set.
To see this, from Theorem~\ref{Tarski Seidenberg Theorem}, we see that $\{x\in A: \exists y\in B \textrm{ s.t. } (x,y)\in C\}$ is semi-algebraic.
As the complement of $U$ is the union of the complement of $A$ and the set $\{x\in A :\exists y\in B \textrm{ s.t. } (x,y)\notin C\},$
it follows that the complement of $U$ is semi-algebraic by Proposition~\ref{property of SA}(iii).
Thus, $U$ is also semi-algebraic by Proposition~\ref{property of SA}(iii).
In general, if we have a finite collection of semi-algebraic sets, then any set obtained from them by a finite chain of quantifiers is also semi-algebraic.
}\end{remark}

\begin{lemma}[Monotonicity Lemma]\label{Monotonicity Theorem}
Let $\phi  \colon (a,b)\to\mathbb{R}$ be a semi-algebraic function. Then there exist $a=a_0<a_1<\cdots<a_s<a_{s+1}=b$ such that$,$ for each $i=0,1,\ldots,s,$ the restriction $\phi _{|_{(a_i,a_{i+1})}}$ is analytic$,$ and either constant$,$ or strictly increasing or strictly decreasing.
\end{lemma}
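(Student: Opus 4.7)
The plan is to exploit the fact that a semi-algebraic function on an interval must satisfy a polynomial equation, use this to get real analyticity off a finite exceptional set, and then use the semi-algebraicity of the derivative to further partition the interval into pieces with constant sign of $\phi'$.

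First, I would observe that the graph $\Gamma_\phi := \{(t,\phi(t)) : t\in(a,b)\}$ is a semi-algebraic subset of $\mathbb{R}^2$ directly from the definition of a semi-algebraic map. Since the projection of $\Gamma_\phi$ onto the first coordinate is injective (its fibres are singletons), a standard cell-decomposition argument for semi-algebraic subsets of the plane rules out two-dimensional pieces and forces $\Gamma_\phi$ to be contained in the zero set of some nonzero polynomial $P\in\mathbb{R}[x,y]$. Write $P(x,y)=\sum_{j=0}^{d}a_j(x)\,y^j$ with $a_d\not\equiv 0$ and $d$ chosen minimal among all such $P$.

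Next, let $F\subset(a,b)$ be the set of points $t$ where either $a_d(t)=0$ or the $y$-discriminant of $P$ vanishes at $t$. Both are one-variable polynomials, and by minimality of $d$ neither is identically zero, so $F$ is finite. For any $t_0\in(a,b)\setminus F$, the implicit function theorem applied to $P$ at $(t_0,\phi(t_0))$ shows that $\phi$ is real analytic near $t_0$, with derivative given by the semi-algebraic formula $\phi'(t)=-\partial_x P(t,\phi(t))/\partial_y P(t,\phi(t))$. Thus $\phi'$ is a well-defined semi-algebraic function on $(a,b)\setminus F$. Applying Proposition~\ref{property of SA}(v) together with Remark~\ref{Remark1}, the sets $S_+:=\{\phi'>0\}$, $S_-:=\{\phi'<0\}$, and $S_0:=\{\phi'=0\}$ are all semi-algebraic subsets of $\mathbb{R}$, hence by Proposition~\ref{property of SA}(i) each is a finite union of points and open intervals.

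Collecting $F$ together with the finitely many endpoints of the intervals composing $S_+$, $S_-$, $S_0$ yields the required partition $a=a_0<a_1<\cdots<a_s<a_{s+1}=b$. On each open subinterval $(a_i,a_{i+1})$ no point of $F$ lies inside, so $\phi$ is real analytic there; moreover, by construction the subinterval is contained in exactly one of $S_+$, $S_-$, or $S_0$, so $\phi'$ has constant sign (positive, negative, or identically zero). A standard mean value argument then produces, respectively, strict increase, strict decrease, or constancy of $\phi$ on $(a_i,a_{i+1})$.

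The main obstacle is the very first analytic step: producing the polynomial $P(x,y)$ that vanishes identically on $\Gamma_\phi$. This is where the essential content of semi-algebraic geometry intervenes, since one needs to know that a one-dimensional semi-algebraic subset of $\mathbb{R}^2$ whose defining formula can involve strict inequalities is nevertheless contained in an algebraic curve; the injectivity of the first-coordinate projection on $\Gamma_\phi$ is what rules out the open two-dimensional semi-algebraic cells and forces the inclusion. Once this is established, the remainder of the proof is a routine combination of the implicit function theorem with the finiteness properties of semi-algebraic subsets of $\mathbb{R}$ supplied by Proposition~\ref{property of SA}.
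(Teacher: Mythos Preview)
The paper does not supply its own proof of this lemma; it is quoted without proof as a standard fact of real algebraic geometry (see the blanket reference to \cite{Bochnak1998,Ha2017} preceding Proposition~\ref{property of SA}). So there is no ``paper's proof'' to compare against, and your sketch has to stand on its own.

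Your overall strategy is the classical one and is essentially right, but there is a genuine gap precisely where you declare the argument routine. At a point $t_0\notin F$ the implicit function theorem produces $d$ analytic branches $\psi_1,\ldots,\psi_d$ near $t_0$, each satisfying $P(t,\psi_i(t))=0$, and you know $\phi(t)\in\{\psi_1(t),\ldots,\psi_d(t)\}$ for every such $t$. To conclude that $\phi$ \emph{coincides with a single branch} on a neighbourhood of $t_0$ you implicitly use continuity of $\phi$ at $t_0$, but semi-algebraic functions need not be continuous. For instance, take $P(x,y)=y^2-1$ and $\phi(t)=1$ for $t\ge 0$, $\phi(t)=-1$ for $t<0$: here $a_d\equiv 1$, the $y$-discriminant is the nonzero constant $4$, so your set $F$ is empty, yet $\phi$ is not analytic near $0$. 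The fix is short but must be made explicit: the set of discontinuities of a one-variable semi-algebraic function is semi-algebraic and zero-dimensional, hence finite, so enlarge $F$ by these finitely many points before invoking the implicit function theorem. Equivalently, note that each set $\{t:\phi(t)=\psi_i(t)\}$ is semi-algebraic in $\mathbb{R}$, hence a finite union of points and intervals by Proposition~\ref{property of SA}(i), so $\phi$ can switch branches only finitely often.

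A smaller point: your assertion that minimality of $d$ forces the $y$-discriminant of $P$ to be not identically zero is correct (a repeated irreducible factor of positive $y$-degree could be divided out, lowering $d$), but it deserves a line of justification rather than being stated as obvious.
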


\begin{lemma}[Growth Dichotomy Lemma]\label{Growth Dichotomy Lemma}
Let $\phi \colon  (0, \epsilon)\to \mathbb{R}$ be a semi-algebraic function with $\phi(t) \neq 0$ for all $t\in (0, \epsilon).$
Then there exist constants $c \neq 0$ and $q \in \mathbb{Q}$ such that $\phi(t) = ct^q + o(t^q )$ as $t \to 0^+.$
\end{lemma}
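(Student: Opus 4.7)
The plan is to reduce the statement, via the Monotonicity Lemma, to a Puiseux-series expansion of an analytic semi-algebraic function at the endpoint $0$. First I would apply Lemma \ref{Monotonicity Theorem} to $\phi$ on $(0,\epsilon)$ to obtain $\delta\in(0,\epsilon)$ such that the restriction of $\phi$ to $(0,\delta)$ is analytic and is either constant, strictly increasing, or strictly decreasing. The constant case is immediate: if $\phi(t)\equiv c$ on $(0,\delta)$, then $c\neq 0$ by hypothesis and the conclusion holds with $q=0$. So I may assume $\phi$ is strictly monotone and analytic on $(0,\delta)$.

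Next, I would produce a polynomial equation satisfied by $\phi$. The graph $\Gamma:=\{(t,\phi(t)):t\in(0,\delta)\}$ is a semi-algebraic subset of $\mathbb{R}^2$ of dimension one (the projection onto the $t$-axis is a semi-algebraic bijection onto $(0,\delta)$). A standard consequence of semi-algebraic cell decomposition is that the Zariski closure of a one-dimensional semi-algebraic set in $\mathbb{R}^2$ is a proper algebraic subset of $\mathbb{R}^2$; equivalently, there exists a nonzero polynomial $P(T,Y)\in\mathbb{R}[T,Y]$ such that $P(t,\phi(t))=0$ for all $t\in(0,\delta)$. I would then invoke the classical Puiseux theorem at $t=0$: each analytic branch of $\{P=0\}$ that is the graph of a continuous function on a pointed right neighborhood of $0$ admits a convergent expansion of the form $y(t)=\sum_{k\ge k_0}a_k t^{k/N}$ with a positive integer $N$, an integer $k_0$, and $a_{k_0}\neq 0$. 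Since $\phi$ is continuous and monotone on $(0,\delta)$, it must coincide with exactly one such branch near $0^+$, so setting $q:=k_0/N\in\mathbb{Q}$ and $c:=a_{k_0}$ gives $\phi(t)=ct^q+o(t^q)$ as $t\to 0^+$, with $c\neq 0$.

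The main obstacle is the existence of the annihilating polynomial $P$: this is where a nontrivial piece of semi-algebraic geometry is needed (cell decomposition or, equivalently, the fact that a semi-algebraic set of dimension $d$ in $\mathbb{R}^n$ has Zariski closure of dimension at least $d$, combined with the observation that $\Gamma$ is neither the whole plane nor a point, hence lies in a proper algebraic curve). Once $P$ is in hand, everything else reduces to the classical Puiseux theorem and the extraction of a leading term, and the fact that $c\neq 0$ is built into Puiseux's theorem, although it can also be verified directly from the hypothesis $\phi(t)\neq 0$ on $(0,\epsilon)$.
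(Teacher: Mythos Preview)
The paper does not prove this lemma; it is stated in Section~\ref{sect:2} as a preliminary fact from real algebraic geometry (the references are \cite{Bochnak1998,Ha2017}), with no proof given. So there is nothing in the paper to compare your argument against.

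That said, your proposal is a correct and standard route to the result. Reducing via the Monotonicity Lemma to an analytic, monotone piece, then using that the one-dimensional semi-algebraic graph lies in a real algebraic curve $\{P=0\}$ and invoking Newton--Puiseux at $t=0$, is exactly how this lemma is usually derived. Two small points you might tighten in a write-up: first, before applying Puiseux you should arrange $P(0,Y)\not\equiv 0$ by dividing out any power of $T$ from $P$ (legitimate since $t>0$ on your interval); second, the case $\lim_{t\to 0^+}\phi(t)=\pm\infty$ is not excluded by the hypotheses and corresponds to $k_0<0$, so you are implicitly using the Laurent--Puiseux form of the theorem (which the Newton polygon method indeed yields). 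Your identification of the Zariski-closure step---that a one-dimensional semi-algebraic subset of $\mathbb{R}^2$ has Zariski closure of dimension one and hence is contained in a proper algebraic curve---as the single nontrivial geometric input is accurate; this is Proposition~2.8.2 in \cite{Bochnak1998}.
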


\section{The main result}\label{sect:3}

Hereafter that$,$ we assume that $f, g_i, h_j \in \mathbb{R}[x, y], i = 1, \ldots, r, j = 1, \ldots, s,$ and put
\begin{eqnarray*}
A &:=& \{(x, y) \in \mathbb{R}^n \times \mathbb{R}^m : g_i(x, y) \ge 0, \ i = 1,\ldots, r\}, \\
B &:=& \{(x, y) \in \mathbb{R}^n \times \mathbb{R}^m : h_j(x, y) \ge 0, \ j = 1,\ldots, s\}.
\end{eqnarray*}
Moreover, for each $x \in \mathbb{R}^n,$ we define
$$B(x) := \{y \in \mathbb{R}^m : h_j(x, y) \ge 0, \ j = 1,\ldots, s\}.$$
Now, we recall Problem $({\rm MPEC})$, which admits the following form:
\begin{eqnarray}
f_\ast:=
&\min\limits_{(x,y)\in \mathbb{R}^n\times\mathbb{R}^m}& f(x,y) \notag\\
&{\rm s.t.}& (x,y)\in A, \ y\in B(x), \eqname{MPEC}\\
&& \varphi(x,y,v)\ge 0, \ \forall v\in B(x),\notag
\end{eqnarray}
where $\varphi\colon \mathbb{R}^n\times \mathbb{R}^m\times \mathbb{R}^m\to\mathbb{R}$ is a polynomial function, and we call $\varphi(x,y,v)\ge 0$ for all $v\in B(x)$ the {\it equilibrium constraints}. Throughout this work we shall assume that the feasible set of Problem ${\rm (MPEC)}$ is nonempty. We also need the following blanket assumption:
\begin{enumerate}
\item[{(H1)}] {\em There exists a compact semi-algebraic set $\mathrm{\Omega}\subset\mathbb{R}^n\times\mathbb{R}^m$ such that $B\subset \mathrm{\Omega}$ and
for each $x\in{\mathrm{Proj}}_x\mathrm{\Omega},$ the set $B(x)$ is nonempty.}
\end{enumerate}

Here and in  the following, ${\mathrm{Proj}}_x\mathrm{\Omega}$ stands for the image of $\mathrm{\Omega}$ via the canonical projection $\mathbb{R}^n\times\mathbb{R}^m\to\mathbb{R}^n,$ $(x,y)\mapsto x.$

Now, we define the function $J\colon \mathrm{\Omega}\to \mathbb{R}$ by
\begin{equation*}
(x,y)\mapsto J(x,y):=\min_{v\in B(x)}\varphi(x,y,v).
\end{equation*}

\begin{lemma}\label{Lemma1}
Suppose that the assumption ${\rm (H1)}$ satisfies. Then the function $J \colon \mathrm{\Omega}\to\mathbb{R}$ is well-defined and semi-algebraic$;$ furthermore$,$ it is lower semicontinuous$,$ i.e.$,$ for any $\bar x \in \mathrm{\Omega}$ we have
$$\liminf_{x\to \bar x,x\in\mathrm{\Omega}}\phi(x)\ge  \phi(\bar x).$$
\end{lemma}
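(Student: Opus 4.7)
The plan is to verify the three assertions—well-definedness, semi-algebraicity, and lower semicontinuity—separately, each following from a standard topological or logical property combined with the compactness supplied by assumption (H1).

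First, I would establish that $J$ is well-defined. The set $B$ is closed (as the intersection of finitely many polynomial sublevel sets $\{h_j\ge 0\}$) and, by (H1), contained in the compact set $\mathrm{\Omega}$, hence compact. For any $(x,y)\in\mathrm{\Omega}$, the $x$-coordinate lies in ${\rm Proj}_x\mathrm{\Omega}$, so by (H1) the slice $B(x)$ is nonempty; it is the preimage of the closed set $B$ under the continuous map $v\mapsto(x,v)$ and is contained in the bounded slice $\{v:(x,v)\in\mathrm{\Omega}\}$, so it is compact. Continuity of the polynomial $\varphi(x,y,\cdot)$ on this compact set then guarantees that the infimum is attained.

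Next, for semi-algebraicity, I would describe the graph of $J$ by first-order formulas over semi-algebraic sets and invoke Remark \ref{Remark1} and Theorem \ref{Tarski Seidenberg Theorem}. Concretely, the sublevel set
$$\{(x,y,t)\in\mathrm{\Omega}\times\mathbb{R}:J(x,y)\le t\}=\{(x,y,t):\exists v\in B(x),\ \varphi(x,y,v)\le t\}$$
is the projection onto the $(x,y,t)$-coordinates of the semi-algebraic set
$$\{(x,y,t,v)\in\mathrm{\Omega}\times\mathbb{R}\times\mathbb{R}^m:(x,v)\in B,\ \varphi(x,y,v)\le t\},$$
hence semi-algebraic by Theorem \ref{Tarski Seidenberg Theorem}; the analogous argument gives that $\{J(x,y)\ge t\}$ is semi-algebraic, and intersecting the two descriptions yields the graph of $J$.

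Finally, for lower semicontinuity at $(\bar x,\bar y)\in\mathrm{\Omega}$, I would take a sequence $(x_k,y_k)\to(\bar x,\bar y)$ in $\mathrm{\Omega}$ realizing $\liminf_{(x,y)\to(\bar x,\bar y)}J(x,y)$, and for each $k$ pick $v_k\in B(x_k)$ with $J(x_k,y_k)=\varphi(x_k,y_k,v_k)$. Because $(x_k,v_k)\in B\subset\mathrm{\Omega}$ and $\mathrm{\Omega}$ is compact, I may pass to a subsequence along which $v_k\to\bar v$; closedness of $B$ then gives $(\bar x,\bar v)\in B$, so $\bar v\in B(\bar x)$. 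Continuity of the polynomial $\varphi$ gives $\liminf_k J(x_k,y_k)=\varphi(\bar x,\bar y,\bar v)\ge J(\bar x,\bar y)$. The one mildly delicate step is ensuring the cluster point $\bar v$ still belongs to $B(\bar x)$ in the limit, which is exactly where the closedness of $B$ together with the uniform confinement provided by (H1) is used; the remaining arguments are routine.
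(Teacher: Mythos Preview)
Your proposal is correct and follows essentially the same route as the paper's proof: both verify well-definedness from compactness of $B(x)$ and continuity of $\varphi$, invoke Tarski--Seidenberg for semi-algebraicity, and prove lower semicontinuity by extracting a convergent subsequence of minimizers $v_k\in B(x_k)$ and using closedness of $B$ to place the limit in $B(\bar x)$. The only difference is that you spell out the semi-algebraicity argument via explicit sublevel-set projections, whereas the paper just says ``it is not hard to see''; this is elaboration rather than a different idea.
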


\begin{proof}
It follows from the assumption ${\rm (H1)}$ that $B(x)$ is a nonempty and compact set.
Moreover, since the polynomial function $\varphi$ is continuous, the optimal value function $(x,y)\mapsto J(x,y)$ is well-defined.
Also, by Tarski--Seidenberg Theorem (see Theorem~\ref{Tarski Seidenberg Theorem}), it is not hard to see that $J$ is semi-algebraic.
To prove the lower semi-continuity of $J$, let $(x^k,y^k) \in \Omega$ be a sequence such that $(x^k,y^k)\to(\bar x,\bar y)$ as $k\to\infty.$
Since $B(x^k)\subset \mathrm{Proj}_xB$ and $\mathrm{Proj}_xB$ is a compact set (as $B$ is compact), $B(x^k)$ is compact.
Also, since $\varphi$ is continuous on $\mathbb{R}^n\times\mathbb{R}^m\times\mathbb{R}^m,$ there exists $v^k\in B(x^k)$ such that $J(x^k,y^k)=\varphi(x^k,y^k,v^k).$
Moreover, as $v^k\in B(x^k)$, without loss of generality, we assume that $v^k\to \bar v$ as $k\to \infty.$
Note that $h_j(x^k,v^k)\ge0$ for $j=1,\ldots,s,$ and all $k.$ This implies that $h_j(\bar x,\bar v)\ge0$ for $j=1,\ldots,s,$ and so, $\bar v\in B(\bar x).$ Finally, we have
\begin{eqnarray*}
\liminf_{k\to\infty}J(x^k,y^k)
=\lim_{k\to\infty}\varphi(x^k,y^k, v^k)
=\varphi(\bar x,\bar y,\bar v)\ge \min_{v\in B(\bar x)} \varphi(\bar x,\bar y,v)=J(\bar x,\bar y),
\end{eqnarray*}
and thus, $J$ is a lower semicontinuous function.
\end{proof}

Now, along with Lemma~\ref{Lemma1}, and under the assumption ${\rm (H1)},$  Problem ${\rm (MPEC)}$ can be rewritten as
\begin{eqnarray}
&\min\limits_{(x,y)\in \mathbb{R}^n\times\mathbb{R}^m}& f(x,y) \notag\\
&{\rm s.t.}& (x,y)\in A\cap B, \eqname{P}\\
&& J(x,y)\ge 0. \notag
\end{eqnarray}
Moreover, if the function $J$ is continuous, then Problem ${\rm (P)}$ has at least a minimizer. Unfortunately, the following example shows that the function $J$ may not be continuous.
\begin{example}{\rm
In $\mathbb{R}^2$, let
\begin{eqnarray*}
A&:=&\{(x,y)\in\mathbb{R}^2 : 1-x^2\ge0, \ 1-y^2\ge0\},\\
B&:=&\{(x,y)\in\mathbb{R}^2 : 1-y^2\ge0, \ -xy\ge0\}.
\end{eqnarray*}
Consider a mathematical programming with equilibrium constraint as follows:
\begin{eqnarray*}
&\min\limits_{(x,y)\in \mathbb{R}\times\mathbb{R}}& f(x,y) \\
&{\rm s.t.}& (x,y)\in A, \ y\in B(x):=\{v\in\mathbb{R}:1-v^2\ge0, \ -xv\ge0\}, \\
&& \varphi(x, y, v) := v - y\ge 0, \ \forall v\in B(x).
\end{eqnarray*}
It is not hard to check that for all $(x,y)\in\mathbb{R} \times\mathbb{R},$
\begin{equation*}
J(x,y) = \min_{v\in B(x)}(v-y) =
\begin{cases}
-y & \textrm{if } x<0, \\
-1-y & \textrm{if } x\ge0.
\end{cases}
\end{equation*}
Therefore, $J$ is not continuous at all points $(x,y)\in\{0\}\times\mathbb{R}.$
Note that in this example,
\begin{equation*}
B(x) = \{v\in\mathbb{R}:1-v^2\ge0, \ -xv\ge0\} =
\begin{cases}
\left[-1,0\right] & \textrm{if } x>0, \\
\left[0,1\right] & \textrm{if } x<0, \\
\left[-1,1\right] & \textrm{if } x=0.
\end{cases}
\end{equation*}
Hence the set-valued map $B$ is not lower semicontinuous\footnote{We say that the set-valued map $B\colon \mathrm{Proj}_x \mathrm{\Omega} \rightrightarrows \mathbb{R}^m,$ $x \rightrightarrows B(x),$ is said to be {\em lower semi-continuous} at $\bar x \in \mathrm{Proj}_x \mathrm{\Omega}$ iff for each open set $V \subset \mathbb{R}^m$ satisfying $B(\bar x) \cap V \neq \emptyset,$ there exists $\epsilon > 0$ such that $B(x) \cap V \neq \emptyset$ whenever $\|x-\bar x\| < \epsilon.$} at $x = 0.$
}\end{example}

The following lemma provides a sufficient condition for $J$ being continuous.

\begin{lemma}\label{Lemma2}
Suppose that the assumption ${\rm (H1)}$ satisfies. If the set-valued mapping
$$B\colon \mathrm{Proj}_x \mathrm{\Omega} \rightrightarrows \mathbb{R}^m, \quad x \rightrightarrows B(x),$$
is lower semi-continuous$,$ then the function $J$ is continuous.
\end{lemma}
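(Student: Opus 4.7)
The plan is to combine Lemma~\ref{Lemma1} (which already gives lower semi-continuity of $J$ on $\Omega$) with a matching upper semi-continuity estimate coming from the lower semi-continuity of the set-valued map $B$. Since a function that is both lower and upper semi-continuous is continuous, this reduces the proof to verifying that for every $(\bar x, \bar y) \in \Omega$ and every sequence $(x^k, y^k) \in \Omega$ with $(x^k, y^k) \to (\bar x, \bar y)$, one has $\limsup_{k\to\infty} J(x^k, y^k) \le J(\bar x, \bar y)$.

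First, I would exploit the compactness of $B(\bar x)$ (which is valid by assumption (H1), since $B$ is compact) together with continuity of $\varphi$ to pick a minimizer $\bar v \in B(\bar x)$ of $v \mapsto \varphi(\bar x, \bar y, v)$, so that $J(\bar x,\bar y) = \varphi(\bar x, \bar y, \bar v)$. The key step is then to construct a sequence $v^k \in B(x^k)$ with $v^k \to \bar v$. To do this, I would apply the lower semi-continuity of $B$ at $\bar x$ to the open neighborhoods $V_\ell := \mathbb{B}(\bar v, 1/\ell)$ of $\bar v$: for each $\ell \in \mathbb{N}$ there exists $\delta_\ell > 0$ (which we may take decreasing in $\ell$) such that $B(x) \cap \mathbb{B}(\bar v, 1/\ell) \neq \emptyset$ whenever $\|x - \bar x\| < \delta_\ell$. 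Given the sequence $x^k \to \bar x$, define $\ell_k$ as the largest integer with $\|x^k - \bar x\| < \delta_{\ell_k}$; then $\ell_k \to \infty$, and choosing $v^k \in B(x^k) \cap \mathbb{B}(\bar v, 1/\ell_k)$ yields $v^k \to \bar v$.

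With such a sequence in hand, the conclusion is automatic: since $v^k \in B(x^k)$, the definition of $J$ gives
\begin{equation*}
J(x^k, y^k) \;=\; \min_{v \in B(x^k)} \varphi(x^k, y^k, v) \;\le\; \varphi(x^k, y^k, v^k),
\end{equation*}
and continuity of $\varphi$ together with $(x^k, y^k, v^k) \to (\bar x, \bar y, \bar v)$ yields
\begin{equation*}
\limsup_{k\to\infty} J(x^k, y^k) \;\le\; \lim_{k\to\infty}\varphi(x^k, y^k, v^k) \;=\; \varphi(\bar x, \bar y, \bar v) \;=\; J(\bar x, \bar y).
\end{equation*}
Combined with the lower semi-continuity already established in Lemma~\ref{Lemma1}, this shows $\lim_{k\to\infty} J(x^k, y^k) = J(\bar x, \bar y)$ for every such sequence, hence $J$ is continuous on $\Omega$.

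The only nontrivial point in the argument is the construction of the approximating sequence $v^k \to \bar v$ with $v^k \in B(x^k)$; everything else is a routine consequence of continuity of $\varphi$ and compactness of $B(\bar x)$. Once one has the right quantitative extraction of $v^k$ from lower semi-continuity of the set-valued map $B$, the proof compresses into a few lines.
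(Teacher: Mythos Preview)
Your proof is correct and follows essentially the same strategy as the paper: invoke Lemma~\ref{Lemma1} for lower semicontinuity, then use lower semicontinuity of the set-valued map $B$ to approximate a minimizer $\bar v\in B(\bar x)$ by points $v^k\in B(x^k)$ and deduce the $\limsup$ inequality from continuity of $\varphi$. The only cosmetic differences are that the paper first observes continuity in $y$ and then argues along sequences $(x^k,\bar y)$, and it works with a fixed $\epsilon$-ball around $\bar v$ followed by $\epsilon\to 0$ rather than your direct diagonal extraction of $v^k\to\bar v$; your joint treatment of $(x^k,y^k)$ is in fact slightly cleaner.
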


\begin{proof}
It is easy to see that $J$ is continuous in the variable $y.$ So it remains to show that $J$ is continuous in the variable $x.$
To do this, take any $(\bar{x}, \bar{y}) \in \mathrm{\Omega}$ and let $(x^k,\bar{y})\in\mathrm{\Omega}$ be a sequence such that $\lim_{k \to \infty} x^k = \bar{x}.$
We shall show that
\begin{equation*}
\lim_{k \to \infty} J(x^k, \bar{y}) = J(\bar{x}, \bar{y}).
\end{equation*}
In fact, since $J$ is lower semi-continuous, we have
\begin{equation*}
J(\bar{x}, \bar{y}) \le \liminf_{k \to \infty} J(x^k, \bar{y}).
\end{equation*}
Hence, it suffices to show that
\begin{equation*}
\limsup_{k \to \infty} J(x^k, \bar{y}) \le J(\bar{x}, \bar{y}).
\end{equation*}
To see this, let $\bar{v} \in B(\bar{x})$ be such that $J(\bar{x}, \bar{y}) = \varphi(\bar{x}, \bar y, \bar{v}).$
Take any $\epsilon > 0.$ Then
\begin{equation*}
\mathbb{B}(\bar{v}, \epsilon) \cap B(\bar{x}) \ne \emptyset.
\end{equation*}
Since $B$ is lower semi-continuous, it follows that
\begin{equation*}
\mathbb{B}(\bar{v}, \epsilon) \cap B(x^k) \ne \emptyset \quad \textrm{ for all } \quad k \gg 1.
\end{equation*}
In particular, for each $k \gg 1,$ there exists $v^k \in \mathbb{B}(\bar{v}, \epsilon) \cap B(x^k).$ By the definition of $J,$ then
\begin{equation*}
J(x^k, \bar{y}) = \min_{v \in B(x^k)} \varphi(x^k,\bar y, v) \le \varphi(x^k,\bar y, v^k).
\end{equation*}
Therefore,
\begin{equation*}
\limsup_{k \to \infty} J(x^k, \bar{y}) \le \limsup_{k \to \infty} \varphi(x^k,\bar y, v^k) = \varphi(\bar{x}, \bar y,\bar{v}_\epsilon)
\end{equation*}
for some $\bar{v}_\epsilon \in \mathbb{B}(\bar{v}, \epsilon).$ Letting $\epsilon \to 0,$ the desired result follows.
\end{proof}

\begin{remark}{\rm
By Lemma~\ref{Lemma2}, if the set-valued map $B$ does not depend on the variable $x$ then the function $J$ is continuous.
}\end{remark}

\subsection{The $\epsilon$-approximation of Problem (MPEC)}

Let $\epsilon\ge0,$ and consider the following perturbed sets of $A$ and $B$:
\begin{eqnarray*}
A_{\epsilon} &:=&\{(x,y)\in\mathrm{\Omega} : g_i(x,y)\ge-\epsilon, \ i=1,\ldots,r\},\\
B_{\epsilon} &:=&\{(x,y)\in\mathrm{\Omega} : h_j(x,y)\ge-\epsilon, \ j=1,\ldots,s\}.
\end{eqnarray*}
Then $A_\epsilon$ and $B_\epsilon$ are nonempty compact semi-algebraic sets. We would like to mention that it is different from \cite{Jeyakumar2016,Lasserre2012} that we perturbed the sets $A$ and $B.$ It turns out that we do not make the assumptions concerning the interior of $A$ and $B$ to obtain the results in this paper, as we shall see.

Next, we define an $\epsilon$-approximation of Problem ${\rm (P)}$ as follows:
\begin{eqnarray}
&\min\limits_{(x,y)\in \mathbb{R}^n\times\mathbb{R}^m}& f(x,y)\notag \\
&{\rm s.t.}& (x,y)\in A_\epsilon\cap B_\epsilon, \eqname{P$_\epsilon$}\\
&& J(x,y)\ge-\epsilon.\notag
\end{eqnarray}
We denote the optimal value of problems ${\rm (P)}$ and ${\rm (P_\epsilon)}$ by $\mathrm{val}{\rm (P)}$ and $\mathrm{val}{\rm (P_\epsilon)},$ respectively.

\begin{lemma}\label{Lemma3}
Suppose that the assumption ${\rm (H1)}$ satisfies. Then the following two statements hold$:$
\begin{itemize}
\item[{\rm (i)}] The function
\begin{equation*}
[0,+\infty)\to \mathbb{R}, \ \ \epsilon \mapsto \mathrm{val}{\rm (P_\epsilon)},
\end{equation*}
is well-defined$,$ non-increasing and semi-algebraic. In particular$,$ it is analytic except at finitely many points.

\item[{\rm (ii)}] If $J$ is continuous$,$ then a global minimizer for Problem ${\rm (P_\epsilon)}$ exists for all $\epsilon\ge0.$
Furthermore$,$ there exist $\bar\epsilon>0,$ $q\in\mathbb{Q}$ with $q>0,$ and $c\le0$ such that for all $\epsilon\in[0,\bar\epsilon],$
\begin{equation}\label{Lemma3rel1}
\mathrm{val}{\rm (P_\epsilon)}=\mathrm{val}{\rm (P)}+c\epsilon^q+o(\epsilon^q).
\end{equation}
\end{itemize}
\end{lemma}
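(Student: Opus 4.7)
The plan is to prove (i) and (ii) in turn, relying on the semi-algebraic machinery of Section~\ref{sect:2}. For (i), well-definedness is immediate since every feasible point of (P) (nonempty by the blanket hypothesis on (MPEC)) remains feasible for $({\rm P}_\epsilon)$ for each $\epsilon \ge 0$, and the continuous $f$ is bounded on the compact set $\mathrm{\Omega}$, so $\mathrm{val}({\rm P}_\epsilon) \in \mathbb{R}$. Monotonicity follows from the inclusion of feasible sets: $0 \le \epsilon_1 \le \epsilon_2$ forces $\mathrm{val}({\rm P}_{\epsilon_2}) \le \mathrm{val}({\rm P}_{\epsilon_1})$. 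Semi-algebraicity is obtained by quantifier elimination: by Lemma~\ref{Lemma1}, the set
\begin{equation*}
\{(\epsilon, x, y) \in [0,+\infty) \times \mathbb{R}^n \times \mathbb{R}^m : (x,y) \in A_\epsilon \cap B_\epsilon, \ J(x,y) \ge -\epsilon\}
\end{equation*}
is semi-algebraic, so the graph of $\epsilon \mapsto \mathrm{val}({\rm P}_\epsilon)$ is semi-algebraic by Theorem~\ref{Tarski Seidenberg Theorem} and Remark~\ref{Remark1}. The Monotonicity Lemma (Lemma~\ref{Monotonicity Theorem}) then yields analyticity except at finitely many points.

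For the existence claim in (ii), continuity of $J$ makes $\{J \ge -\epsilon\}$ closed, so the feasible set of $({\rm P}_\epsilon)$ is a closed subset of the compact set $\mathrm{\Omega}$, hence compact; continuity of $f$ then delivers a global minimizer. For the asymptotic expansion \eqref{Lemma3rel1}, the crucial intermediate step is to prove
\begin{equation*}
\lim_{\epsilon \to 0^+} \mathrm{val}({\rm P}_\epsilon) = \mathrm{val}({\rm P}).
\end{equation*}
The inequality $\limsup \le \mathrm{val}({\rm P})$ is immediate from (i). For the reverse one, I would pick a minimizer $(x^\epsilon, y^\epsilon)$ of $({\rm P}_\epsilon)$ for each small $\epsilon$, use compactness of $\mathrm{\Omega}$ to extract a subsequence converging to some $(x^*, y^*)$, and pass the inequalities $g_i(x^\epsilon, y^\epsilon) \ge -\epsilon$, $h_j(x^\epsilon, y^\epsilon) \ge -\epsilon$ and $J(x^\epsilon, y^\epsilon) \ge -\epsilon$ to the limit using continuity of $g_i, h_j$ and $J$, yielding $(x^*, y^*)$ feasible for (P) and hence $f(x^*, y^*) \ge \mathrm{val}({\rm P})$.

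Once this limit is in hand, set $\phi(\epsilon) := \mathrm{val}({\rm P}_\epsilon) - \mathrm{val}({\rm P})$. By (i), $\phi$ is a semi-algebraic function on $[0,+\infty)$ with $\phi(0) = 0$, $\phi \le 0$, and $\phi(\epsilon) \to 0$ as $\epsilon \to 0^+$. Applying the Monotonicity Lemma to $\phi$ on a right neighborhood of $0$, either $\phi \equiv 0$ on some $(0, \bar\epsilon)$ (so the conclusion holds with $c = 0$ and any $q > 0$), or $\phi$ is nowhere zero on some $(0, \bar\epsilon)$, in which case the Growth Dichotomy Lemma (Lemma~\ref{Growth Dichotomy Lemma}) produces $c \ne 0$ and $q \in \mathbb{Q}$ with $\phi(\epsilon) = c\epsilon^q + o(\epsilon^q)$; the conditions $\phi \le 0$ and $\phi(\epsilon) \to 0$ then force $c < 0$ and $q > 0$. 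The main obstacle is establishing the limit $\mathrm{val}({\rm P}_\epsilon) \to \mathrm{val}({\rm P})$: this is the unique place where continuity of $J$ (rather than only the lower semi-continuity supplied by Lemma~\ref{Lemma1}) is essential, since otherwise one could not conclude $J(x^*, y^*) \ge 0$ at the limit point and the whole asymptotic analysis would collapse.
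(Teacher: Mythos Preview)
Your proposal is correct and follows essentially the same route as the paper: well-definedness and monotonicity via inclusion of feasible sets, semi-algebraicity via Lemma~\ref{Lemma1} plus quantifier elimination (Theorem~\ref{Tarski Seidenberg Theorem} and Remark~\ref{Remark1}), existence of minimizers from compactness once $J$ is continuous, right-continuity of $\epsilon \mapsto \mathrm{val}({\rm P}_\epsilon)$ at $0$ by extracting a limit of minimizers, and then the Monotonicity/Growth Dichotomy Lemmas applied to $\phi(\epsilon)=\mathrm{val}({\rm P}_\epsilon)-\mathrm{val}({\rm P})$. The only cosmetic difference is in the last step: the paper deduces $c<0$ and $q>0$ by differentiating ($\phi$ strictly decreasing $\Rightarrow \phi'<0 \Rightarrow cq<0$, then $\phi\to 0$ forces $q>0$), whereas you read the signs directly from $\phi\le 0$ and $\phi\to 0$; both are valid and equally short.
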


\begin{proof}
(i) By the assumptions, the feasible set of Problem ${\rm (P)}$ (and hence of $({\rm P}_\epsilon)$) is nonempty and bounded.
In particular, $\mathrm{val}{\rm (P_\epsilon)}$ is finite for all $\epsilon\ge0,$ and so, the function $\epsilon \mapsto \mathrm{val}{\rm (P_\epsilon)}$ is well-defined.

From the definition of Problem~$({\rm P}_{\epsilon})$, it is clear that if $0 \le \epsilon_1 \le \epsilon_2$, then ${\rm val}{\rm (P_{\epsilon_1})} \ge {\rm val}{\rm(P_{\epsilon_2})}$.

By Lemma \ref{Lemma1}, the function $J$ is well-defined and semi-algebraic. Let
\begin{eqnarray*}
X & := & \{(\epsilon, x, y) \in [0, +\infty) \times \mathrm{\Omega}  :  g_i(x, y) \ge-\epsilon, i = 1, \ldots, r, \\
&  & \hspace{3.75cm} h_j(x, y) \ge-\epsilon, j = 1, \ldots, s,  J(x, y) \ge - \epsilon \}, \\
Y & := & \{(\epsilon, x, y) \in X  : f(x, y) \le f(a, b), \  \forall (\epsilon, a, b) \in X\}.
\end{eqnarray*}
We can verify that $X$ and $Y$ are semi-algebraic sets by Proposition~\ref{property of SA}(iii)-(iv) and Remark~\ref{Remark1}.
Further, by Tarski--Seidenberg Theorem (see Theorem~\ref{Tarski Seidenberg Theorem}), the function
$$[0,+\infty) \rightarrow \mathbb{R}, \quad \epsilon \mapsto {\rm val}({\rm P}_{\epsilon}),$$
 is semi-algebraic, and so it is analytic except at finitely many points (due to Lemma~\ref{Monotonicity Theorem}).

(ii) Assume that $J$ is continuous. Then for each $\epsilon \ge 0,$ the constraint set of Problem~$({\rm P}_{\epsilon})$ is nonempty compact, and so a global minimizer for $({\rm P}_{\epsilon})$ exists (because the objective polynomial $f$ is continuous).

We now claim that the function $\epsilon \mapsto \mathrm{val}{\rm (P_\epsilon)}$ is right continuous at $0.$
To see this, let $\epsilon^k>0,$ $k=1,2,\ldots,$ with $\epsilon^k\downarrow0,$ and let each $(x^k,y^k)$ be an optimal solution of $({\rm P}_{\epsilon^k}).$
Since $\mathrm{\Omega}$ is compact, without loss of generality, we assume that $(x^k,y^k)\to(x^\ast,y^\ast)$ as $k\to\infty.$
Note that the functions $g_i,$ $h_j,$ and $J$ are continuous.
So, we can easily verify that $(x^\ast,y^\ast)$ is a feasible solution of ${\rm (P)}.$
This yields that
\begin{equation*}
\mathrm{val}{\rm (P)}\le f(x^\ast,y^\ast)=\lim_{k\to\infty}f(x^k,y^k)=\lim_{k\to\infty}{\rm val}({\rm P}_{\epsilon^k})=\lim_{\epsilon\downarrow0^+}{\rm val}({\rm P}_{\epsilon})\le\mathrm{val}{\rm (P)},
\end{equation*}
where the last inequality follows from the nonincreasing property of the function $\epsilon \mapsto \mathrm{val}{\rm (P_\epsilon)},$
and so, the function $\epsilon \mapsto \mathrm{val}{\rm (P_\epsilon)}$ is right continuous at $0.$

Define the function
\begin{equation*}
\phi\colon [0,+\infty)\to \mathbb{R}, \ \ \epsilon\mapsto\mathrm{val}{\rm (P_\epsilon)}-\mathrm{val}{\rm (P)}.
\end{equation*}
Then $\phi$ is a nonincreasing semi-algebraic function and
\begin{equation}\label{Lemma3rel2}
\lim_{\epsilon \downarrow 0^+} \phi(\epsilon) = \phi(0) = 0.
\end{equation}
Invoking Monotonicity Lemma (see Lemma~\ref{Monotonicity Theorem}), there exists $\bar \epsilon>0$ such that
$\phi_{|_{[0,\bar\epsilon)}}$ is either constant $0$ or strictly decreasing.
Moreover, by (i), we may assume that $\phi$ is analytic on $(0,\bar\epsilon).$
If $\phi(\epsilon)=0$ for all $\epsilon\in[0,\bar\epsilon),$ then letting $c=0$ in \eqref{Lemma3rel1}, the desired result follows.
Otherwise, applying Growth Dichotomy Lemma (see Lemma~\ref{Growth Dichotomy Lemma}) (reducing $\bar\epsilon$ if necessary), we see that there exist constants $c \neq 0$ and $q \in \mathbb{Q}$ such that $\phi(\epsilon) = c\epsilon^q + o(\epsilon^q )$ as $\epsilon \to 0^+.$ Since $\phi$ is strictly decreasing,
\begin{equation*}
0 \ > \ \phi'(\epsilon) \ = \ c q \epsilon^{q-1} + o(\epsilon^{q-1} ),
\end{equation*}
and so $cq < 0.$ Finally, we deduce easily from \eqref{Lemma3rel2} that $c < 0$ and $q > 0.$
\end{proof}

\subsection{Solving the $\epsilon$-approximation via sequential SDP relaxations}

For simplicity, we write $z:=(x,y).$ Let $\mu$ be a finite Borel probability measure uniformly distributed on $\mathrm{\Omega} \subset \mathbb{R}^n\times\mathbb{R}^m.$
We will assume that $\mathrm{\Omega}$ is a simple compact set (e.g., a simplex, a box or an ellipsoid) so that the moments
\begin{equation*}
\gamma_{\alpha} := \int_{\mathrm{\Omega}} z^{\alpha} d \mu(z), \quad \alpha \in \mathbb{N}^n \times \mathbb{N}^m,
\end{equation*}
can be computed easily. For instance, in the sequel we will assume that
\begin{equation*}
\mathrm{\Omega} := \{z \in \mathbb{R}^n \times \mathbb{R}^m : h_j(z) \ge 0, \ j = s + 1, \ldots, s+n+m \}
\end{equation*}
with  $h_j \in \mathbb{R}[x, y]$ being the polynomial
\begin{equation*}
h_j(x, y) :=
\begin{cases}
M-x_j^2  & \textrm{ for } j = s + 1, \ldots, s + n,\\
M-y_j^2 & \textrm{ for } j = s + n + 1, \ldots, s+n+m,
\end{cases}
\end{equation*}
where $M>0$ is chosen so that $\mathrm{\Omega} \supset B.$

The following assumption, which is commonly used in polynomial optimization (see \cite{Lasserre2009, Ha2017} and references therein), plays an important key role for our results.

\begin{enumerate}
\item[(H2)] {\em The system $\left\{h_1,\ldots,h_s\right\}\subset \mathbb{R}[x,y]$ satisfies the Archimedean property.}
\end{enumerate}

\begin{remark}{\rm
(i) The assumption ${\rm (H2)}$ implies that the set $B$ is compact but the inverse is not necessarily true. However, if $B$ is compact and one knows a bound $R$ for $\|(x, y)\|$ on $B,$ then it suffices to add the ``redundant'' quadratic constraint $h_0(x, y) := R^2 - \|(x, y)\|^2 \ge 0$ to the definition of $B,$ and ${\rm (H2)}$ holds.

(ii) When the set $B$ is compact and the assumption ${\rm (H2)}$ does not hold, there is still a representation of polynomials, strictly positive on $B$ (see Corollary 3 in Schm\"udgen \cite{Schmudgen1991}). But, instead of being ``linear'' as in \eqref{Eqn7} below, there are product terms of the form $h_{j_1} \cdots h_{j_l}$ times a sum of squares of polynomials, with $j_1, \ldots, j_l \in \{1, \ldots, s + n + m\}.$ However,
the size of this semidefinite programming will grow exponentially with the number of constraints $s$ and the number of variables $n + m.$
}\end{remark}

For each $k \in \mathbb{N},$ with $k \ge  \max\{\lceil \frac{\deg \varphi}{2}\rceil, \lceil \frac{\deg h_j}{2}\rceil\},$ (where the notation $\lceil a\rceil$ stands for the smallest integer that is greater than or equal to $a,$) consider the following optimization problem
\begin{eqnarray} \label{Eqn7}
&\sup\limits_{p, (\sigma_j)} & \int_{\mathrm{\Omega}} p d \mu(z) =
\left( \sum_{\alpha \in \mathbb{N}_{2k}^{n + m}} p_\alpha \gamma_\alpha \right) \nonumber\\
& {\rm s.t.} & \varphi(x,y,v) - p(x, y) \ = \ \sigma_0 + \sum_{j = 1}^{s} \sigma_j h_j(x, v) + \sum_{j = s +n+1}^{s+n+m} \sigma_j h_j(x, y), \\
& & p := \sum_{\alpha \in \mathbb{N}_{2k}^{n + m}} p_\alpha z^\alpha \in \mathbb{R}[x, y], \ \sigma_j \in \mathrm{\Sigma}^2[x, y, v], \nonumber  \\
& & \deg \sigma_0 \le 2k, \ \deg (\sigma_j h_j) \le 2k, j \in \{1, \ldots, s, s + n + 1, \ldots, s+n+m\}. \nonumber
\end{eqnarray}
It is not hard to see that this is a semidefinite programming (see, for example, \cite{Ha2017,Lasserre2009}).
We also should mention that semidefinite programs can be solved (approximatively) in polynomial time, using the interior point methods.
For more details on semidefinite programming, the interested reader is referred to Vandenberghe and Boyd \cite{Vandenberghe1996}.

Lemma~\ref{TheoremLasserre} below shows that any optimal solution of Problem~\eqref{Eqn7} permits to approximate $J$ in a strong sense.
Note that we do not include the polynomials $h_j(x,y)$ for $j=s+1,\ldots,s+n$ in \eqref{Eqn7} as is usual.
It turns out that we do not need these polynomials to obtain the results in this paper, as we shall see (compare \cite{Jeyakumar2016,Lasserre2012}).

\begin{lemma} \label{TheoremLasserre}
Suppose that the assumptions ${\rm (H1)}$  and ${\rm (H2)}$ hold.
Let $\rho_k$ be the optimal value of the semidefinite program~\eqref{Eqn7} and let $((p_\alpha), (\sigma_j))$ be an optimal or a $\frac{1}{k}$-solution of \eqref{Eqn7} $($i.e.$,$ such that $\sum_{\alpha} p_\alpha \gamma_{\alpha} \ge \rho_k - \frac{1}{k}).$ Let $J_k \in \mathbb{R}[z]$ be the polynomial $z := (x, y) \mapsto J_k(z) := \sum_{\alpha} p_\alpha z^\alpha.$
Then $J_k(z) \le J(z)$ for all $z\in \mathrm{\Omega}$ and
\begin{equation*}
\lim_{k\to\infty}\int_{\mathrm{\Omega}} |J_k(z) - J(z)| d \mu(z) = 0,
\end{equation*}
that is$,$ $J_k \to J$ for the $L_1(\mathrm{\Omega}, \mu)$-norm\footnote{$L_1(\mathrm{\Omega}, \mu)$ is the Banach space of $\mu$-integrable functions on $\mathrm{\Omega},$ with norm $\|f\| = \int_{\mathrm{\Omega}} |f| d \mu.$}.
\end{lemma}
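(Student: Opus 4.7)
The plan is to treat the two conclusions separately. The pointwise inequality $J_k\le J$ is essentially a one-line consequence of the SOS identity built into the constraint of \eqref{Eqn7}, while the $L^1$ convergence will require a joint--marginal style approximation argument of the type used by Lasserre.

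For the pointwise bound, fix $(x,y)\in\mathrm{\Omega}$ and any $v\in B(x)$. In the defining identity of \eqref{Eqn7} the SOS term $\sigma_0$ is non-negative; each product $\sigma_j(x,y,v)\,h_j(x,v)$ with $j\in\{1,\ldots,s\}$ is non-negative because $v\in B(x)$; and each $\sigma_j(x,y,v)\,h_j(x,y)$ with $j\in\{s+n+1,\ldots,s+n+m\}$ is non-negative because $y$ lies in the $y$-box of $\mathrm{\Omega}$. Hence $\varphi(x,y,v)\ge J_k(x,y)$, and minimising over $v\in B(x)$ gives $J(x,y)\ge J_k(x,y)$.

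For the $L^1$ convergence, since $J_k\le J$ on $\mathrm{\Omega}$ one has $\int_{\mathrm{\Omega}}|J-J_k|\,d\mu=\int_{\mathrm{\Omega}}J\,d\mu-\int_{\mathrm{\Omega}}J_k\,d\mu\le\int_{\mathrm{\Omega}}J\,d\mu-\rho_k+\frac{1}{k}$, so it suffices to prove $\rho_k\to\int_{\mathrm{\Omega}}J\,d\mu$. The upper bound $\rho_k\le\int_{\mathrm{\Omega}}J\,d\mu$ is immediate from the pointwise bound. For the matching lower bound I would construct, for each $\eta>0$, a feasible solution $p$ of \eqref{Eqn7} (for all $k$ large enough) with $\int_{\mathrm{\Omega}}p\,d\mu\ge\int_{\mathrm{\Omega}}J\,d\mu-C\eta$. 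Using that $J$ is lower semi-continuous on the compact $\mathrm{\Omega}$ by Lemma~\ref{Lemma1}, the Moreau-type envelopes $\psi_n(z):=\inf_{z'\in\mathrm{\Omega}}(J(z')+n\|z-z'\|)$ are Lipschitz continuous, satisfy $\psi_n\le J$, and increase pointwise to $J$; by monotone convergence I may choose $n$ with $\int_{\mathrm{\Omega}}(J-\psi_n)\,d\mu<\eta$, use Stone--Weierstrass to find a polynomial $q$ with $\|q-\psi_n\|_{\infty,\mathrm{\Omega}}<\eta$, and set $p:=q-2\eta$, so that $p\le J-\eta$ on $\mathrm{\Omega}$. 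Because $B\subset\mathrm{\Omega}$, any $(x,y,v)$ with $v\in B(x)$ and $y$ in the $y$-box of $\mathrm{\Omega}$ also has $(x,y)\in\mathrm{\Omega}$, so $\varphi(x,y,v)-p(x,y)\ge\eta>0$ on the full semi-algebraic set defined by the generators $\{h_j(x,v)\}_{j=1}^{s}\cup\{h_j(x,y)\}_{j=s+n+1}^{s+n+m}$. The Archimedeanness of this module follows from (H2) on the $(x,v)$-variables together with the extra generators $M-y_j^2$ bounding $y$, so Putinar's Positivstellensatz yields an SOS decomposition of precisely the form imposed in \eqref{Eqn7}; once $k$ is large enough that all degree bounds are respected, $p$ is feasible. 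Integrating, $\rho_k\ge\int_{\mathrm{\Omega}}p\,d\mu\ge\int_{\mathrm{\Omega}}J\,d\mu-\eta(1+3\mu(\mathrm{\Omega}))$, and letting $k\to\infty$ then $\eta\to 0$ concludes.

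The main obstacle will be this lower-bound construction. The example following Lemma~\ref{Lemma1} shows that $J$ need not be continuous, so Stone--Weierstrass cannot be applied to $J$ directly; one has to regularise it from below to a continuous function while losing only $O(\eta)$ in $L^1$, then pay another $O(\eta)$ in passing to a polynomial, and shift down by $2\eta$ to secure the strict positivity Putinar's theorem requires. A secondary but important subtlety---precisely where the paper differs from the bilevel and semi-infinite analogues cited in the introduction---is that Archimedeanness must be checked for the specific quadratic module entering \eqref{Eqn7}, which omits the $x$-box polynomials $M-x_j^2$; this is not automatic but follows from (H2) already forcing $(x,v)$ to be bounded, and I would want to make this step explicit.
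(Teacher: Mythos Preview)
Your argument is correct, and in particular the two delicate points---that the constraint set of \eqref{Eqn7} forces $(x,y)\in\mathrm{\Omega}$ even without the generators $M-x_j^2$, and that the quadratic module generated by $\{h_j(x,v)\}_{j\le s}\cup\{h_j(x,y)\}_{j\ge s+n+1}$ is Archimedean---are handled exactly as in the paper.

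The route, however, differs. The paper's own proof is a two-step reduction: it checks (a) that the description $\{h_j(x,v)\ge 0,\ j\le s;\ h_j(x,y)\ge 0,\ j\ge s+n+1\}$ is equivalent to ``$(x,y)\in\mathrm{\Omega}$ and $v\in B(x)$'' (using (H1) and $B\subset\mathrm{\Omega}$), and (b) that this system is Archimedean in $\mathbb{R}[x,y,v]$ (using (H2) plus the extra $M-y_j^2$ generators), and then simply invokes \cite[Theorem~3.5]{Lasserre2010} as a black box to obtain both conclusions. You instead unpack that black box: after the same Archimedeanness check you supply a self-contained proof of the $L_1$ convergence via Moreau--Yosida regularisation of the lower semicontinuous $J$, Stone--Weierstrass, a uniform downward shift, and Putinar's Positivstellensatz. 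What the paper's approach buys is brevity and a clean separation between the structural verification specific to this problem and the general parametric-approximation machinery; what your approach buys is independence from \cite{Lasserre2010} and an explicit handling of the discontinuity of $J$ (which is the only genuinely non-routine step in the unpacked argument).
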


\begin{proof}
The assumption ${\rm (H1)}$ implies that for any $(x, y, v) \in \mathbb{R}^n \times \mathbb{R}^m \times \mathbb{R}^m$ we have
$(x, y) \in \Omega$ and $v \in B(x)$ if and only if
\begin{eqnarray*}
h_j(x, v) \ge 0 \textrm{ for } j = 1, \ldots, s, \textrm{ and } h_j(x, y) \ge 0 \textrm{ for } j = s + n + 1, \ldots, s + n + m.
\end{eqnarray*}

On the other hand, the assumption ${\rm (H2)}$ gives the existence of $R>0$ and $\sigma_j\in\mathrm{\Sigma}[x,v],$ $j=0,1,\ldots,s,$ such that
\begin{equation*}
R-\|(x,v)\|^2=\sigma_0+\sum_{j=1}^s\sigma_jh_j(x,v).
\end{equation*}
Letting $R' := R + m \cdot M,$ we get
\begin{eqnarray*}
R' - \|(x,y,v)\|^2
&=& \sigma_0+\sum_{j=1}^s\sigma_jh_j(x,v)+\sum_{j=s+n+1}^{s+n+m} (M - y_j^2) \\
&=&  \sigma_0+\sum_{j=1}^s\sigma_jh_j(x,v)+\sum_{j=s+n+1}^{s+n+m}1\cdot h_j(x,y),
\end{eqnarray*}
which implies that the system $\{h_1,\ldots,h_s,h_{s+n+1},\ldots,h_{s+n+m}\}\subset \mathbb{R}[x,y,v]$ satisfies the Archimedean property.
Now, applying \cite[Theorem~3.5]{Lasserre2010}, the desired result follows.
\end{proof}

We now introduce a scheme to solve the $\epsilon$-approximation problem $({\rm P}_{\epsilon})$ for arbitrary $\epsilon>0$, using sequences of semidefinite programming relaxations.

\begin{algorithm}\label{algorithm1}
\noindent
\begin{enumerate}
  \item[{\rm Step 0:}] Fix  $\epsilon>  0$. Set $k=1$.
  \item[{\rm Step 1:}] Solve the semidefinite program \eqref{Eqn7} and obtain a $\frac{1}{k}$-solution $(p, \sigma_j)$ of Problem~\eqref{Eqn7}.
Define $J_{k}(z) := \sum_{\alpha \in \mathbb{N}_{2k}^{n + m}} p_{\alpha} z^{\alpha},$ $z:=(x,y)\in\mathbb{R}^n\times\mathbb{R}^m.$
  \item[{\rm Step 2:}] Consider the following basic closed semi-algebraic set
\begin{eqnarray*}
S_k \ := \ \{(x, y) \in \mathrm{\Omega}
&:& g_i(x, y) \ge-\epsilon, \ i=1,\ldots, r,  \ h_j(x, y) \ge-\epsilon, \ j=1, \ldots, s, \\
& & J_{k}(x, y) \ge -\epsilon\}.
\end{eqnarray*}

If $S_k = \emptyset$, then let $k=k+1$ and return to Step 1. Otherwise$,$ go to Step 3.

  \item[{\rm Step 3:}]Solve the following polynomial optimization problem
\begin{eqnarray}
&\min\limits_{(x,y) \in \mathrm{\Omega}}& f(x,y)  \notag\\
& {\rm s.t.} & g_i(x, y) \ge-\epsilon, \ i= 1,\ldots, r, \ h_j(x, y) \ge-\epsilon, \ j = 1, \ldots, s, \eqname{P$_{\epsilon}^k$} \\
& & J_{k}(x, y) \ge -\epsilon.\notag
\end{eqnarray}
  \item[{\rm Step 4:}] Let $v_{\epsilon}^k := \min_{1 \le i \le k} {\rm val}({\rm P}_{\epsilon}^i)$.  Update $k=k+1$. Go back to Step 1.
\end{enumerate}
\end{algorithm}

\begin{remark}{\rm
We would like to note that the feasibility problem of the semialgebraic set $S_k$ in Step~2 can be tested by an SDP hierarchy via the Positivstellnsantz; this was explained in \cite{Parrilo2003} and was implemented in the matlab toolbox SOSTOOLS. As explained before, Step 3 can also be accomplished by solving an
sequence of SDPs; for more details, we refer the reader to \cite{Lasserre2001, Lasserre2009}.
}\end{remark}

Next, we justify that Algorithm~\ref{algorithm1} is a legitimate procedure.

\begin{lemma}\label{Lemma4}
Suppose that assumptions ${\rm (H1)}$  and ${\rm (H2)}$ hold.
Let $\epsilon > 0$ be any fixed. Then the following two statements hold$:$
\begin{itemize}
\item[{\rm (i)}] There exists an integer $k_{\epsilon} > 0$ such that for all $k \ge k_\epsilon,$
\begin{equation*}
S_k \ne \emptyset \quad \textrm{ and } \quad v_\epsilon^{k} <  f_* + \epsilon.
\end{equation*}
\item[{\rm (ii)}] The limit $\lim_{k \to \infty} v_{\epsilon}^k$ exists and
\begin{equation*}
\mathrm{val}({\rm P}_{\epsilon}) \le \lim_{k \to \infty} v_{\epsilon}^k \le \displaystyle \lim_{\delta \to \epsilon^{-}}\mathrm{val}({\rm P}_{\delta}).
\end{equation*}
Further$,$ the equality $\mathrm{val}({\rm P}_{\epsilon}) = \lim_{k \to \infty} v_{\epsilon}^k$ holds for all $\epsilon \in (0, +\infty)$ except finitely many points.
\end{itemize}
\end{lemma}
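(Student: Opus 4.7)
\textbf{Proof plan for Lemma~\ref{Lemma4}.} The core obstacle is that Lemma~\ref{TheoremLasserre} only provides $L_1(\mathrm{\Omega},\mu)$-convergence $J_k\to J$ together with $J_k\le J$ pointwise, which does not deliver pointwise control of $J_k$ at any fixed $(\bar x,\bar y)$. The plan is to couple this $L_1$-convergence with the lower semi-continuity of $J$ from Lemma~\ref{Lemma1} via a Markov's-inequality argument on small neighborhoods of positive $\mu$-measure inside $\mathrm{\Omega}$. For each large $k$ this will produce a point lying in $S_k$ whose objective value is controlled, which is the whole game.

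For part (i), first I record that the feasible set of $({\rm MPEC})$ equals $A\cap B\cap\{J\ge 0\}$ and is compact (closedness of $\{J\ge 0\}$ from lsc of $J$, boundedness from $B\subset\mathrm{\Omega}$), so $f_*$ is attained at some $(x^*,y^*)$. Using continuity of $f,g_i,h_j$ and lsc of $J$ at $(x^*,y^*)$, I fix a neighborhood $W\subset\mathrm{\Omega}$ of $(x^*,y^*)$ on which $g_i,h_j,J>-\epsilon/2$ and $f<f_*+\epsilon/2$; because $\mathrm{\Omega}$ is a full-dimensional box, $\mu(W)>0$. Since $0\le J-J_k$ and $\int_{\mathrm{\Omega}}(J-J_k)\,d\mu\to 0$, Markov's inequality furnishes a threshold $k_\epsilon$ such that for all $k\ge k_\epsilon$ the $\mu$-measure of $\{(x,y)\in W:J(x,y)-J_k(x,y)\ge\epsilon/2\}$ is strictly smaller than $\mu(W)$. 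Any $(x_k,y_k)\in W$ outside this bad set satisfies $J_k(x_k,y_k)>J(x_k,y_k)-\epsilon/2>-\epsilon$, so $(x_k,y_k)\in S_k$ and $f(x_k,y_k)<f_*+\epsilon/2$, giving $v_\epsilon^k\le\mathrm{val}({\rm P}_\epsilon^k)<f_*+\epsilon$.

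For part (ii), monotonicity of $\{v_\epsilon^k\}$ is built into its definition and the uniform lower bound $\min_{\mathrm{\Omega}} f$ guarantees the limit exists. The inequality $\mathrm{val}({\rm P}_\epsilon)\le\lim v_\epsilon^k$ is immediate: $J_k\le J$ makes every $({\rm P}_\epsilon^k)$-feasible point also $({\rm P}_\epsilon)$-feasible, so $\mathrm{val}({\rm P}_\epsilon)\le\mathrm{val}({\rm P}_\epsilon^k)$ for each $k$. For the upper bound, I fix arbitrary $\delta\in(0,\epsilon)$, take a minimizer $(x_\delta,y_\delta)$ of $({\rm P}_\delta)$ (its feasible set is compact by lsc of $J$, and $f$ is continuous), and rerun the neighborhood-plus-Markov construction of part (i) with tolerance $(\epsilon-\delta)/2$ in place of $\epsilon/2$. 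This delivers $(x_k,y_k)\in S_k$ with $f(x_k,y_k)<\mathrm{val}({\rm P}_\delta)+(\epsilon-\delta)/2$ for all $k$ sufficiently large, so $\lim v_\epsilon^k\le\mathrm{val}({\rm P}_\delta)+(\epsilon-\delta)/2$; letting $\delta\uparrow\epsilon$ yields $\lim v_\epsilon^k\le\lim_{\delta\to\epsilon^-}\mathrm{val}({\rm P}_\delta)$. The equality $\mathrm{val}({\rm P}_\epsilon)=\lim v_\epsilon^k$ outside a finite set then follows from Lemma~\ref{Lemma3}(i): the value function $\epsilon\mapsto\mathrm{val}({\rm P}_\epsilon)$ is semi-algebraic and analytic away from finitely many points, hence in particular left-continuous there, which squeezes the two bounds together.

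The main pressure point throughout is the passage from $L_1$-convergence to a usable pointwise-in-a-neighborhood statement; it is handled once and reused in both parts, and it crucially depends on the two structural facts that $J-J_k\ge 0$ (so the Markov argument applies without absolute values) and that $J$ is lsc (so a neighborhood on which $J>-\epsilon/2$ genuinely exists). The remaining work is careful bookkeeping of the $\epsilon/2$'s and a check that $\mu(W)>0$ even when $(x^*,y^*)$ lies on the boundary of the box $\mathrm{\Omega}$.
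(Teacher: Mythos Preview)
Your proof is correct and takes a genuinely different (and cleaner) route than the paper's.

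The paper passes from $L_1$-convergence of $J_k$ to almost-everywhere convergence along a subsequence, and then invokes Egorov's theorem to obtain a small exceptional set $\Sigma$ off of which $J_{k_\ell}\to J$ uniformly; a point in $(U_1\cap U_2)\setminus\Sigma$ then lands in $S_{k_\ell}$. This machinery only delivers the conclusion along the subsequence $\{k_\ell\}$ (sufficient for $v_\epsilon^k<f_*+\epsilon$ by monotonicity, but strictly speaking it yields $S_{k_\ell}\ne\emptyset$ rather than $S_k\ne\emptyset$ for all large $k$). Your Markov-inequality argument exploits the sign condition $J-J_k\ge 0$ to get
\[
\mu\bigl(\{J-J_k\ge \epsilon/2\}\bigr)\le \tfrac{2}{\epsilon}\int_{\Omega}(J-J_k)\,d\mu\longrightarrow 0,
\]
which is more elementary, avoids Egorov altogether, and gives $S_k\ne\emptyset$ for \emph{every} sufficiently large $k$. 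A second minor difference: the paper first perturbs the near-minimizer into $\mathrm{int}(A_\epsilon\cap B_\epsilon)$ so that the neighborhood $U_1\cap U_2$ is open in $\mathbb{R}^{n+m}$ and hence has positive $\mu$-measure, whereas you bypass this by using that $\Omega$ is a full-dimensional box, so any relative neighborhood of a point of $\Omega$ already has positive measure. Both approaches are valid; yours is shorter and yields a slightly stronger conclusion in part~(i). Part~(ii) is handled the same way in both proofs.
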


\begin{proof}
(i) The assumptions that the feasible set of ${\rm (MPEC)}$ is nonempty and bounded imply that $f_\ast>-\infty,$ and so, there exists a point $z_{\epsilon} \in A \cap B$ such  that
\begin{equation*}
J(z_{\epsilon}) \ge 0 \quad \textrm{ and } \quad f(z_{\epsilon}) < f_* + \frac{\epsilon}{4}.
\end{equation*}
By a standard argument, it is easy to find a point $\tilde{z}_\epsilon\in {\rm int}\left(A_\epsilon\cap B_\epsilon\right)$ such that
\begin{equation*}
J(\tilde{z}_\epsilon)>-\frac{\epsilon}{2} \quad \textrm{ and } \quad f(\tilde{z}_\epsilon)<f_*+\frac{\epsilon}{2}.
\end{equation*}
Let
\begin{equation*}
U_1 := \{z \in {\rm int}(A_\epsilon \cap B_\epsilon) : J(z) > -\frac{\epsilon}{2}\}.
\end{equation*}
Then $\tilde{z}_\epsilon\in U_1.$
We will now show that $U_1$ is an open set.
Suppose on the contrary that there exists a sequence $\{\tilde{z}^k\}\subset {\rm int}(A_\epsilon\cap B_\epsilon)$ such that $\tilde{z}^k\to \tilde{z}\in U_1$ and $J(\tilde{z}^k)\le-\frac{\epsilon}{2}.$
This together with the lower semicontinuity of $J$ (see Lemma~\ref{Lemma1}) yields that
\begin{equation*}
-\frac{\epsilon}{2}<J(\tilde{z})\le\liminf_{k\to\infty}J(\tilde{z}^k)\le-\frac{\epsilon}{2},
\end{equation*}
which is a contradiction, and so, $U_1$ is a nonempty open set. Next, as $f$ is continuous, there exists $\rho > 0$ such that
\begin{equation*}
f(z) < f_* + \epsilon \quad \textrm{ whenever } \quad z \in U_2 := \{z \in {\rm int}(A_\epsilon \cap B_\epsilon) : \|z - \tilde{z}_{\epsilon}\| < \rho\}.
\end{equation*}
Observe that $\eta := \mu(U_1 \cap U_2) > 0$ because the set $U_1 \cap U_2$ is open and nonempty (because it contains the point $\tilde{z}_{\epsilon}).$
Note from Lemma~\ref{TheoremLasserre} that $J_k \to J$ for the $L_1(\mathrm{\Omega}, \mu)$-norm.
Hence $J_k$ converges to $J$ almost everywhere on $\mathrm{\Omega}.$
As $\mu(\mathrm{\Omega})<+\infty,$ the classical Egorov's theorem (see, for example, \cite[Theorem 2.5.3]{Ash1972}) implies that there is a subsequence $\{k_\ell\}_{\ell \in \mathbb{N}}$  such that $J_{k_\ell} \to J,$ $\mu$-almost uniformly on $\mathrm{\Omega}.$
Hence, there are some Borel set $\mathrm{\Sigma} \subset \mathrm{\Omega}$ and integer $\ell_\epsilon \in \mathbb{N}$ such that
\begin{equation*}
\mu(\mathrm{\Sigma}) < \frac{\eta}{2} \quad \textrm{ and  } \quad \sup_{z \in \mathrm{\Omega} \setminus \mathrm{\Sigma}} |J(z) - J_{k_\ell}(z)| < \frac{\epsilon}{2} \ \textrm{ for all } \ \ell \ge \ell_\epsilon.
\end{equation*}
In particular, as $\mu(\mathrm{\Sigma}) < \frac{\eta}{2} < \mu(U_1 \cap U_2),$ the set $(U_1 \cap U_2) \setminus \mathrm{\Sigma}$ is nonempty.
Therefore,
\begin{eqnarray*}
S_{k_\ell} \ne \emptyset, \quad J_{k_\ell}(z) > -\epsilon, \quad \textrm{ and } \quad f(z) < f_* + \epsilon,
\end{eqnarray*}
for all $ \ell \ge \ell_\epsilon$ and all $z \in (U_1 \cap U_2) \setminus \mathrm{\Sigma},$ which in turn implies $\mathrm{val}({\rm P}_\epsilon^{k_\ell}) < f_* + \epsilon,$ and consequently, $v_\epsilon^{k_\ell} <  f_* + \epsilon,$ the desired result.

(ii) Recall from Lemma~\ref{TheoremLasserre} that $J_k(z) \le J(z)$ for all $k \in \mathbb{N}$ and for all $z \in \mathrm{\Omega}.$
Then, by definition, ${\rm val}({\rm P}_{\epsilon}^k) \ge {\rm val}({\rm P}_{\epsilon})$ for all $k \in \mathbb{N},$ which in turn implies $v_{\epsilon}^k \ge {\rm val}({\rm P}_{\epsilon})$ for all $k \in \mathbb{N}.$ Note that $v_{\epsilon}^k$ is a non-increasing sequence which is bounded below, and so the limit $v_{\epsilon} := \lim_{k \to \infty}v_{\epsilon}^k$ exists. Moreover, we have
\begin{equation*}
v_{\epsilon}  \ge {\rm val}({\rm P}_{\epsilon}).
\end{equation*}

On the other hand, replacing $({\rm P})$ and $({\rm P}_{\epsilon})$ by $(P_{\epsilon})$ and $(P_{\epsilon - \delta}),$ respectively, in Item~(i), it not hard to see that for every $\delta \in (0,\epsilon),$ there exists an integer $k_\delta > 0$ such that for all $k \ge k_\delta,$
\begin{equation*}
v_{\epsilon}^k \le \mathrm{val}(P_{\epsilon - \delta}) + \delta.
\end{equation*}
Therefore,
\begin{equation*}
{\rm val}(P_{\epsilon}) \le v_{\epsilon} \le \displaystyle \lim_{\delta \to \epsilon^{-}}{\rm val}(P_{\delta}).
\end{equation*}

To see the last assertion in Item~(ii), we only need to notice from Lemma~\ref{Lemma3}(i) that $\epsilon \mapsto {\rm val}(P_{\epsilon})$ is continuous except finitely many points over $(0, +\infty)$.
\end{proof}

\begin{definition}{\rm
For $\epsilon,\delta\ge0,$ a point $(\bar x, \bar y) \in \mathbb{R}^n \times \mathbb{R}^m$ is called a {\em $\delta$-solution} of $({\rm P}_\epsilon^k)$ if $(\bar x, \bar y)$ is feasible for $({\rm P}_\epsilon^k)$ and $f(\bar x, \bar y)\le \mathrm{val}{\rm (P_\epsilon)} + \delta.$
}\end{definition}

We now establish the main theorem of this paper which is the convergence result of Algorithm~\ref{algorithm1}.
\begin{theorem} \label{Theorem1}
Under the assumptions of Lemma~\ref{Lemma4}$,$ suppose that the function $J\colon \mathrm{\Omega}\to\mathbb{R}$ is continuous.
Then there exists $\epsilon_0 > 0$ such that $\lim_{k \to \infty} v_{\epsilon}^k = \mathrm{val}({\rm P}_{\epsilon})$ for all $\epsilon \in (0,\epsilon_0).$
Moreover$,$ let $v_{\epsilon}^k := \min_{1 \le i \le k} \mathrm{val}({\rm P}_{\epsilon}^i)=\mathrm{val}({\rm P}_{\epsilon}^{i_k}),$ and let $(x^k,y^k)$ be a $\frac{1}{k}$-solution of Problem~$({\rm P}_{\epsilon}^{i_k})$.
Then$,$ $\{(x^k,y^k)\}$ is a bounded sequence and any cluster point $(\widehat{x}, \widehat{y})$ of $(x^k,y^k)$ is a global minimizer of Problem~$({\rm P}_{\epsilon})$ for all $\epsilon \in (0,\epsilon_0).$
\end{theorem}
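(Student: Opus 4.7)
The plan is to weave together Lemmas~\ref{Lemma3}, \ref{Lemma4}, and \ref{TheoremLasserre}; no essentially new ingredient is needed. First I would handle the limit statement. Lemma~\ref{Lemma4}(ii) already gives $\lim_k v_\epsilon^k = \mathrm{val}({\rm P}_\epsilon)$ for every $\epsilon\in(0,+\infty)$ outside a finite exceptional set (the finiteness being inherited from the semi-algebraicity/analyticity supplied by Lemma~\ref{Lemma3}(i)). I would therefore simply take $\epsilon_0>0$ strictly smaller than all of those exceptional points; the first assertion then holds on $(0,\epsilon_0)$ by construction.

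For the second part I would fix $\epsilon\in(0,\epsilon_0)$ and take the sequence $(x^k,y^k)$ described in the theorem. Boundedness is immediate because $(x^k,y^k)\in\mathrm{\Omega}$ and $\mathrm{\Omega}$ is compact, so I extract any convergent subsequence (still written $(x^k,y^k)$) with limit $(\widehat x,\widehat y)$ and check the three items characterising a global minimizer of $({\rm P}_\epsilon)$. Continuity of the polynomials $g_i$ and $h_j$ passes the inequalities $g_i(x^k,y^k)\ge-\epsilon,\ h_j(x^k,y^k)\ge-\epsilon$ to the limit, so $(\widehat x,\widehat y)\in A_\epsilon\cap B_\epsilon$. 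The $\tfrac{1}{k}$-solution property combined with the first assertion yields
\[
f(\widehat x,\widehat y)\,=\,\lim_{k\to\infty}f(x^k,y^k)\,\le\,\lim_{k\to\infty}\bigl(\mathrm{val}({\rm P}_\epsilon^{i_k})+\tfrac{1}{k}\bigr)\,=\,\lim_{k\to\infty}v_\epsilon^k\,=\,\mathrm{val}({\rm P}_\epsilon).
\]

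The one genuinely delicate point is the remaining constraint $J(\widehat x,\widehat y)\ge-\epsilon$: we only know its surrogate $J_{i_k}(x^k,y^k)\ge-\epsilon$, and since the indices $i_k$ need not tend to infinity the $L_1$-convergence $J_k\to J$ from Lemma~\ref{TheoremLasserre} is not directly available. I would instead lean on the pointwise bound $J_k(z)\le J(z)$ on $\mathrm{\Omega}$ furnished by the same lemma; this upgrades the surrogate inequality to $-\epsilon\le J_{i_k}(x^k,y^k)\le J(x^k,y^k)$, after which the standing continuity assumption on $J$ lets me pass to the limit and conclude $J(\widehat x,\widehat y)\ge-\epsilon$. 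Feasibility of $(\widehat x,\widehat y)$ for $({\rm P}_\epsilon)$ together with the objective bound above then makes it a global minimizer. I expect this $J_k\le J$ manoeuvre to be the only non-bookkeeping step of the argument; everything else is routine compactness-and-continuity.
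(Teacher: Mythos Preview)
Your proposal is correct and follows essentially the same route as the paper's proof: you invoke Lemma~\ref{Lemma4}(ii) to obtain $\epsilon_0$, use compactness of $\mathrm{\Omega}$ for boundedness, pass the polynomial constraints to the limit, and---exactly as the paper does---exploit the pointwise bound $J_k\le J$ from Lemma~\ref{TheoremLasserre} together with the continuity of $J$ to handle the constraint $J(\widehat x,\widehat y)\ge-\epsilon$. The only cosmetic difference is the order in which you verify feasibility versus the objective bound.
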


\begin{proof}
By Lemma~\ref{Lemma4}(ii), there exists $\epsilon_0 > 0$ such that
\begin{equation*}
\lim_{k \to \infty}  v_{\epsilon}^k = \mathrm{val}({\rm P}_{\epsilon}) \quad \textrm{ for all } \quad \epsilon \in (0,\epsilon_0).
\end{equation*}
Now, fix any $\epsilon \in (0,\epsilon_0).$ Let $v_{\epsilon}^{k} := \min_{1 \le i \le k} {\rm val}({\rm P}_{\epsilon}^i)={\rm val}({\rm P}_{\epsilon}^{i_k}),$ and let $(x^k,y^k)$ be a $\frac{1}{k}$-solution of $({\rm P}^{i_k}_{\epsilon}).$
Then, $\{(x^k,y^k)\} \subseteq A_\epsilon\cap B_\epsilon,$ which is a compact set, and so, without loss of generality, we assume that there is a point $(\hat x,\hat y)\in A_\epsilon\cap B_\epsilon$ such that $(x^k,y^k)\to (\hat x,\hat y)$ as $k\to\infty.$
Note that $J \ge J_k$ on $\mathrm{\Omega}$ for all $k \in \mathbb{N}$ and $(x^k,y^k)$ is feasible for $({\rm P}_{\epsilon}^{i_k}).$
Then, for each $k \in \mathbb{N},$
\begin{equation*}
J(x^k, y^k) \ge J_{i_k}(x^k, y^k) \ge - \epsilon.
\end{equation*}
Passing to the limit and note from the assumption that $J$ is continuous, we have $J(\hat{x}, \hat{y}) \ge  -\epsilon.$
So, $(\hat{x}, \hat{y})$ is feasible for $({\rm P}_{\epsilon})$.
Finally, since $\lim_{k \to \infty}  v_{\epsilon}^k  = \textrm{val}({\rm P}_{\epsilon}),$ we have
\begin{equation*}
f(\hat{x},\hat{y})=\lim_{k \to \infty}f(x^k,y^k) \le  \lim_{k \to \infty}(v_{\epsilon}^{i_k} +\frac{1}{k})= {\rm val}({\rm P}_{\epsilon}),
\end{equation*}
where the inequality follows from the assumption that $(x^k,y^k)$ is a $\frac{1}{k}$-solution of $({\rm P}_{\epsilon}^{i_k}).$
Thus, $(\hat{x},\hat{y})$ is a global minimizer of $({\rm P}_{\epsilon})$.
\end{proof}

The following example illustrates how to solve Problem (MPEC) with our method.
\begin{example}{\rm
Let us consider the following mathematical programming with equilibrium constraints:
\begin{eqnarray}
&\displaystyle \min_{(x,y) \in \mathbb{R}^2}& x+y \notag\\
& \mbox{s.t.} & (x,y)\in A:=\{(x,y)\in\mathbb{R}^2:-x^2\left((xy-1)^2+y^4\right)\ge0\},\notag \\
&&y\in B(x):=\{y\in\mathbb{R}:1-x^2\ge0,\ 1-y^2\ge0\}, \eqname{P$_1$} \\
& & \varphi(x, y, v) := \frac{xv^2}{2}-\frac{v^3}{3}-\left(\frac{xy^2}{2}-\frac{y^3}{3}\right) \ge 0, \forall v \in B(x).\notag
\end{eqnarray}
A simple computation shows that $B(x)=[-1,1]$ and
\begin{eqnarray*}
J(x,y)&=&\min_{v \in [-1,1]}\varphi(x, y, v)\\
&=&\left\{
   \begin{array}{cl}
     -\frac{xy^2}{2}+\frac{y^3}{3} & \quad \textrm{if } \ (x,y)\in \left[\frac{2}{3},1\right]\times\left[-1,1\right], \\
     \frac{x}{2}-\frac{1}{3}-\frac{xy^2}{2}+\frac{y^3}{3} & \quad \textrm{if } \ (x,y)\in \left[-1,\frac{2}{3}\right)\times\left[-1,1\right].
   \end{array}
 \right.
\end{eqnarray*}
Moreover, we can easily verify that the feasible set of Problem $({\rm P}_1)$ is $\{(0,1)\},$ and so, the optimal solution of Problem $({\rm P}_1)$ is $(0,1)$ and the optimal value is $1.$
Letting $\mathrm{\Omega}:=[-1,1]\times [-1,1],$ all the assumptions in Theorem~\ref{Theorem1} are satisfied.

For $k=3,$ using GloptiPoly 3 \cite{Henrion2009}, we obtain a degree $2k(=6)$ polynomial approximation of $J(x,y),$ that is,
\begin{align*}
J_3(x,y)=&-0.3338+0.5011x+0.0098x^2-0.0032x^3-0.5xy^2+0.3333y^3\\
&-0.0696x^4-0.1013x^5-0.0432x^6.
\end{align*}

Setting $\epsilon=0.0005$ and solving the $\epsilon$-approximation of Problem $({\rm P}_1)$
\begin{eqnarray*}
&\displaystyle \min_{(x,y) \in \mathrm{\Omega}}& x+y \\
& \mbox{s.t.} & -x^2\left((xy-1)^2+y^4\right)\ge-\epsilon, \ 1-y^2\ge-\epsilon, \\
& & J_3(x,y)\ge-\epsilon,
\end{eqnarray*}
with GloptiPoly 3, we obtain the point $(-0.0157,1.0000)$ with its associated function value $0.9843,$  which are a good approximation of the optimal solution and optimal value of Problem $({\rm P}_1),$ respectively.
}\end{example}

\begin{remark}\label{rmk2}{\rm
It is worth emphasizing here that our method has an advantage over the ones in \cite{Jeyakumar2016,Lasserre2012} in the sense that
the assumptions concerning the interior of the feasible sets in these papers are no longer necessary.
Moreover, it is not hard to check that we can solve the problems considered in these papers directly by using our method.
We leave the details to the reader.
}\end{remark}

We close this section with two illustrated examples for Remark~\ref{rmk2}.

\begin{example}\label{ex3}{\rm
Let us consider the following bilevel optimization problem (cf. \cite[Example~4.8]{Jeyakumar2016}):
\begin{eqnarray}
&\displaystyle \min_{(x,y) \in \mathbb{R}^2}& x+y \notag\\
& \mbox{s.t.} & -x^2\left(x^2+(y-1)^2-1\right)\ge0,\eqname{P$_2$} \\
&& y\in Y(x):=\argmin_{v\in\mathbb{R}}\left\{\frac{xv^2}{8}-\frac{v^3}{24}:4-v^2\ge0\right\}.\notag
\end{eqnarray}
Let $K:=\left\{(x,y)\in\mathbb{R}^2 : -x^2\left(x^2+(y-1)^2-1\right)\right\}\ge0$ and $F:=\{y\in\mathbb{R}:4-y^2\ge0\}=[-2,2].$
Then, a simple calculation shows that
\begin{equation*}
K\cap(\mathbb{R}\times F)=\{0\}\times[-2,0] \cup \{(x,y)\in\mathbb{R}^2:x^2+(y-1)^2\le1\},
\end{equation*}
and so, ${\rm cl}\left({\rm int}\left(K\cap\left(\mathbb{R}\times F\right)\right)\right)\neq K\cap(\mathbb{R}\times F).$
This says that the assumption in \cite{Jeyakumar2016} does not hold for this problem, and so, it can not be solved this problem by the method in \cite{Jeyakumar2016}.

On the other hand, it is easy to check that
\begin{eqnarray*}
\tilde{J}(x)
=\min_{v \in [-2,2]}\left\{\frac{xv^2}{8}-\frac{v^3}{24}\right\}
=\left\{\begin{array}{cl}
     0 & \quad \textrm{if } \ x\in[\frac{2}{3},1], \\
     \frac{x}{2}-\frac{1}{3} & \quad \textrm{if } \ x\in [-1,\frac{2}{3}).
\end{array}\right.
\end{eqnarray*}
Moreover, we can see that the solution set of the lower-level problem $Y(x)$ is formulated as
\begin{eqnarray*}
Y(x)
=\left\{\begin{array}{cl}
     \{0\} & \quad \textrm{if } \ x\in(\frac{2}{3},1], \\
     \{0,2\} & \quad \textrm{if } \ x=\frac{2}{3}, \\
     \{2\} & \quad \textrm{if } \ x\in[-1,\frac{2}{3}). \\
     \end{array}\right.
\end{eqnarray*}
This yields that the feasible set of Problem $({\rm P}_2)$ is $\{(0,2)\},$ and so, the optimal solution of Problem $({\rm P}_2)$ is $(0,2)$ and the optimal value is $2.$

Let $\mathrm{\Omega}:=[-1,1].$
Then, for $k=3,$ using GloptiPoly 3 \cite{Henrion2009}, we obtain a degree $2k(=6)$ polynomial approximation of $\tilde{J}(x),$ that is,
\begin{eqnarray*}
\tilde{J}_3(x)&\approx&-0.3338+0.5011x+0.0098x^2-0.0032x^3-0.0696x^4\\
&&-0.1012x^5-0.0432x^6.
\end{eqnarray*}
Setting $\epsilon=0.001$ and solving the $\epsilon$-approximation of Problem $({\rm P}_2)$
\begin{eqnarray*}
&\displaystyle \min_{(x,y) \in \mathbb{R}^2}& x+y \\
& \mbox{s.t.} & -x^2\left(x^2+(y-1)^2-1\right)\ge-\epsilon, \\
&& 4-y^2\ge-\epsilon, \\
& & J_3(x)-\left(\frac{xy^2}{8}-\frac{y^3}{24}\right)\ge-\epsilon,
\end{eqnarray*}
with GloptiPoly 3, we obtain the point $(-0.0039,1.9992)$ with its associated function value $1.9953,$  which are a good approximation of the optimal solution and optimal value of Problem $({\rm P}_2),$ respectively.
}\end{example}

\begin{example}{\rm
Consider the following semi-infinite optimization problem:
\begin{eqnarray}
&\displaystyle \min_{(x_1,x_2)\in \mathbb{R}^2}& x_2\notag \\
& \mbox{s.t.} & -x_1^2\left(x_1^2+(x_2-1)^2-1\right)\ge0, \ 4-x_2^2\ge0, \eqname{P$_3$}\\
&&-2x_1^2v^2+v^4-x_1^2+x_2\ge0, \ \forall v\in[-1,1].\notag
\end{eqnarray}
Let $X:=\left\{(x_1,x_2)\in\mathbb{R}^2 : -x_1^2\left(x_1^2+(x_2-1)^2-1\right)\ge0, \ 4-x_2^2\ge0\right\}\ge0.$
Then, as we have already seen from Example~\ref{ex3},
\begin{equation*}
X=\{0\}\times[-2,0] \cup \{(x_1,x_2)\in\mathbb{R}^2:x_1^2+(x_2-1)^2\le1\},
\end{equation*}
which is not the closure of an open set.
It means that the assumption in \cite[Theorem~3.4]{Lasserre2012} does not satisfy for this problem, and so, we may not solve this problem with the method, which is described in \cite{Lasserre2012}.

A simple calculation gives us the function $\Phi\colon\mathbb{R}^2\to\mathbb{R},$
\begin{eqnarray*}
(x_1,x_2)\mapsto\Phi(x_1,x_2)&=&\min_{v \in [-1,1]}\left\{-2x_1^2v^2+v^4-x_1^2+x_2\right\}\\
&=&x_2-x_1^2-x_1^4.
\end{eqnarray*}
It is worth noting that the best known optimal solution of Problem $({\rm P}_3)$ is $(0,0)$ and the best known optimal value is $0.$

Let $\mathrm{\Omega}:=[-1,1]\times[-2,2].$
Then, for $k=2,$ using GloptiPoly 3 \cite{Henrion2009}, we obtain a degree $2k(=4)$ polynomial approximation of $J(x_1,x_2),$ that is,
\begin{eqnarray*}
\Phi_2(x_1,x_2)\approx x_2-x_1^2-x_1^4,
\end{eqnarray*}
which is a good approximation of $\Phi(x_1,x_2).$

Setting $\epsilon=0.0001$ and solving the $\epsilon$-approximation of Problem $({\rm P}_3)$
\begin{eqnarray*}
&\displaystyle \min_{(x_1,x_2) \in \mathbb{R}^2}& x_2 \\
& \mbox{s.t.} &  -x_1^2\left(x_1^2+(x_2-1)^2-1\right)\ge-\epsilon, \ 4-x_2^2\ge-\epsilon, \\
&&\Phi_2(x_1,x_2)\ge-\epsilon,
\end{eqnarray*}
with GloptiPoly 3, we obtain the point $(\bar{x}_1,\bar{x}_2)=(-0.0000,-0.0001)$ with its associated function value $-0.0001,$  which are a good approximation of the optimal solution and optimal value of Problem $({\rm P}_3),$ respectively.
}\end{example}

\section{Conclusions}\label{sect:4}

This paper studies how to solve a polynomial mathematical program with equilibrium constraints. We have proposed a method for finding its global minimizers and global minimum using a sequence of semidefinite programming relaxations and have proved the convergence result for the method. It was different from the papers \cite{Jeyakumar2016,Lasserre2012} that we do not make technical assumptions concerning the interior of the feasible sets. As a byproduct, bilevel polynomial optimization problems and semi-infinite optimization problems, which can be regarded as special cases of Problem (MPEC), were also solvable directly based on our approach.

\subsection*{Acknowledgments}
The authors would like to express their sincere thanks to Guoyin~Li for his warm help for the paper.
The final version of this paper was completed while the third author was visiting at the Vietnam Institute for Advanced Study in Mathematics (VIASM) from January 1 to 31 March, 2019. He would like to thank the Institute for hospitality and support.


\begin{thebibliography}{99}

\bibitem{Anitescu2005}
M.~Anitescu.
\newblock Global convergence of an elastic mode approach for a class of
  mathematical programs with complementarity constraints.
\newblock {\em SIAM Journal on Optimization}, 16(1):120--145, 2005.

\bibitem{Anitescu2007}
M.~Anitescu, P.~Tseng, and S.~J. Wright.
\newblock Elastic-mode algorithms for mathematical programs with equilibrium
  constraints: global convergence and stationarity properties.
\newblock {\em Mathematical Programming}, 110(2):337--371, 2007.

\bibitem{Ash1972}
R.~B. Ash.
\newblock {\em Real analysis and probability}.
\newblock Academic Press, Boston, 1972.

\bibitem{Beck2009}
A.~Beck and A.~Ben-Tal.
\newblock Duality in robust optimization: primal worst equals dual best.
\newblock {\em Operations Research Letters}, 37(1):1--6, 2009.

\bibitem{Bertsekas2010}
D.~Bertsimas, O.~Nohadani, and K.~M. Teo.
\newblock Robust optimization for unconstrained simulation-based problems.
\newblock {\em Operations Research}, 58(1):161--178, 2010.

\bibitem{Bochnak1998}
J.~Bochnak, M.~Coste, and M.-F. Roy.
\newblock {\em Real Algebraic Geometry}.
\newblock Springer, Berlin, 1998.

\bibitem{DeMiguel2005}
V.~DeMiguel, M.~P. Friedlander, F.~J. Nogales, and S.~Scholtes.
\newblock A two-sided relaxation scheme for mathematical programs with
  equilibrium constraints.
\newblock {\em SIAM Journal on Optimization}, 16(2):587--609, 2005.

\bibitem{Dempe2014}
S.~Dempe and A.~B. Zemkoho.
\newblock {K}{K}{T} reformulation and necessary conditions for optimality in
  nonsmooth bilevel optimization.
\newblock {\em SIAM Journal on Optimization}, 24(4):1639--1669, 2014.

\bibitem{Facchinei1999}
F.~Facchinei, H.~Jiang, and L.~Qi.
\newblock A smoothing method for mathematical programs with equilibrium
  constraints.
\newblock {\em Mathematical Programming}, 85(1):107--134, 1999.

\bibitem{Facchinei2003}
F.~Facchinei and J.-S. Pang.
\newblock {\em Finite-Dimensional Variational Inequalities and Complementarity
  Problems}, volume I \& II.
\newblock Springer-Verlag, New York, 2003.

\bibitem{Fletcher2006}
R.~Fletcher, S.~Leyffer, D.~Ralph, and S.~Scholtes.
\newblock Local convergence of {{SQP}} methods for mathematical programs with
  equilibrium constraints.
\newblock {\em SIAM Journal on Optimization}, 17(1):259--286, 2006.

\bibitem{Gaudioso2006}
M.~Gaudioso, G.~Giallombardo, and G.~Miglionico.
\newblock An incremental method for solving convex finite min-max problems.
\newblock {\em Mathematics of Operations Research}, 31(1):173--187, 2006.

\bibitem{Ha2017}
H.~V. H\`a and T.~S. Ph\d{a}m.
\newblock {\em Genericity in Polynomial Optimization}.
\newblock World Scientific Publishing, 2017.

\bibitem{Harker1988}
P.~T. Harker and J.-S. Pang.
\newblock Existence of optimal solutions to mathematical programs with
  equilibrium constraints.
\newblock {\em Operations Research Letters}, 7(2):61--64, 1988.

\bibitem{Henrion2009}
D.~Henrion, J.~B. Lasserre, and J.~Loefberg.
\newblock Gloptipoly 3: moments, optimization and semidefinite programming.
\newblock {\em Optimization Methods and Software}, 24(4--5):761--779, 2009.

\bibitem{Jeyakumar2016}
V.~Jeyakumar, J.~B. Lasserre, G.~Li, and T.~S. Ph\d{a}m.
\newblock Convergent semidefinite programming relaxations for global bilevel
  polynomial optimization problems.
\newblock {\em SIAM Journal on Optimization}, 26(1):753--780, 2016.

\bibitem{Jiang2018}
S.~H. Jiang, J.~Zhang, C.~H. Chen, and G.~H. Lin.
\newblock Smoothing partial exact penalty splitting method for mathematical
  programs with equilibrium constraints.
\newblock {\em Journal of Global Optimization}, 70(1):223--236, 2018.

\bibitem{Jongen2011}
H.~T. Jongen and V.~Shikhman.
\newblock Generalized semi-infinite programming: the nonsmooth symmetric
  reduction ansatz.
\newblock {\em SIAM Journal on Optimization}, 21(1):193--211, 2011.

\bibitem{Lasserre2001}
J.~B. Lasserre.
\newblock Global optimization with polynomials and the problem of moments.
\newblock {\em SIAM Journal on Optimization}, 11(3):796--817, 2001.

\bibitem{Lasserre2010}
J.~B. Lasserre.
\newblock A ``joint+marginal" approach to parametric polynomial optimization.
\newblock {\em SIAM Journal on Optimization}, 20(4):1995--2022, 2010.

\bibitem{Lasserre2009}
J.~B. Lasserre.
\newblock {\em Moments, Positive Polynomials and Their Applications}.
\newblock Imperial College Press, London, 2010.

\bibitem{Lasserre2011}
J.~B. Lasserre.
\newblock Min-max and robust polynomial optimization.
\newblock {\em Journal of Global Optimization}, 51(1):1--10, 2011.

\bibitem{Lasserre2012}
J.~B. Lasserre.
\newblock An algorithm for semi-infinite polynomial optimization.
\newblock {\em TOP}, 20(1):119--129, 2012.

\bibitem{Lin2014}
G.-H. Lin, M.~Xu, and J.~J. Ye.
\newblock On solving simple bilevel programs with a nonconvex lower level
  program.
\newblock {\em Mathematical Programming}, 144(1--2):277--305, 2014.

\bibitem{Liu2004}
X.~Liu and J.~Sun.
\newblock Generalized stationary points and an interior-point method for
  mathematical programs with equilibrium constraints.
\newblock {\em Mathematical Programming}, 101(1):231--261, 2004.

\bibitem{Lucet2002}
Y.~Lucet and J.~J. Ye.
\newblock Erratum: Sensitivity analysis of the value function for optimization
  problems with variational inequality constraints.
\newblock {\em SIAM Journal on Control and Optimization}, 41(4):1315--1319,
  2002.

\bibitem{Luo1996}
Z.~Q. Luo, J.-S. Pang, and D.~Ralph.
\newblock {\em Mathematical Programs with Equilibrium Constraints}.
\newblock Cambridge University Press, Cambridge, 1996.

\bibitem{Nie2017}
J.~Nie, L.~Wang, and J.~J. Ye.
\newblock Bilevel polynomial programs and semidefinite relaxation methods.
\newblock {\em SIAM Journal on Optimization}, 27(3):1728--1757, 2017.

\bibitem{Outrata1998}
J.~V. Outrata, M.~Kocvara, and J.~Zowe.
\newblock {\em Nonsmooth Approach to Optimization Problems with Equilibrium
  Constraints: Theory, Applications and Numerical Results}.
\newblock KluwerAcademic Publishers, Boston, 1998.

\bibitem{Parrilo2003}
P.~A. Parrilo.
\newblock Semidefinite programming relaxations for semialgebraic problems.
\newblock {\em Mathematical Programming}, 96(2):293--320, 2003.

\bibitem{Schmudgen1991}
K.~Schm\"udgen.
\newblock The {{$K$}}-moment problem for compact semi-algebraic sets.
\newblock {\em Mathematische Annalen}, 289(2):203--206, 1991.

\bibitem{Scholtes1999}
S.~Scholtes and M.~St\"{o}hr.
\newblock Exact penalization of mathematical programs with equilibrium
  constraints.
\newblock {\em SIAM Journal on Control and Optimization}, 37(2):617--652, 1999.

\bibitem{Steffensen2010}
S.~Steffensen and M.~Ulbrich.
\newblock A new relaxation scheme for mathematical programs with equilibrium
  constraints.
\newblock {\em SIAM Journal on Optimization}, 20(5):2504--2539, 2010.

\bibitem{Stein2001}
O.~Stein.
\newblock First-order optimality conditions for degenerate index sets in
  generalized semi-infinite optimization.
\newblock {\em Mathematics of Operations Research}, 26(3):565--582, 2001.

\bibitem{Vandenberghe1996}
L.~Vandenberghe and S.~Boyd.
\newblock Semidefinite programming.
\newblock {\em SIAM Review}, 38(1):49--95, 1996.

\bibitem{Ye2010}
J.~J. Ye.
\newblock Constraint qualifications and necessary optimality conditions for
  optimization problems with variational inequality constraints.
\newblock {\em SIAM Journal on Optimization}, 10(4):943--962, 2000.

\end{thebibliography}
\end{document}